\newtheorem{secn}{Definition}[section]
\newtheorem{thm}[secn]{Theorem}
\newtheorem{cor}[secn]{Corollary}
\newtheorem{prop}[secn]{Proposition}
\newtheorem{lem}[secn]{Lemma}
\newtheorem{defn}[secn]{Definition}
\newtheorem{ex}[secn]{Example}
\newcommand{\tX}{\tilde{X}}
\begin{document}
\title{Strongly Contracting Geodesics in a tree of spaces}
\author{Abhijit Pal}
\thanks{Research of the first author was supported by INSPIRE Research Grant}
\address{Department of Mathematics and Statistics, Indian Institute of Technology-Kanpur}
\email{abhipal@iitk.ac.in}
\author{Suman Paul}
\address{Department of Mathematics, Indian Institute of Science Education and Research, Bhopal}
\email{smnpl2009@gmail.com}
\date{}
\maketitle
\begin{abstract}
Let $X$ be a tree of proper geodesic spaces with edge spaces strongly contracting and uniformly separated from each other by a number depending on the contraction function
of edge spaces. Then we prove that the strongly contracting geodesics in vertex spaces are quasiconvex in $X$. We further prove that in $X$ if all the vertex spaces are uniformly hyperbolic metric spaces then  $X$ is a hyperbolic metric space and vertex spaces are quasiconvex in $X$.
\end{abstract}
\begin{center}{AMS Subject Classification : 20F65, 20F67, 20E08}\end{center}
\section{Introduction}
Let $X$ be a  geodesic metric space. A subset $Q$ of a geodesic metric space $X$ is said to be \textit{strongly contracting} if there exists a constant $\sigma\geq 0$ such that the diameter of the  projection of balls, disjoint from $Q$, into $Q$ is at most $\sigma$.   For example, the geodesics and horoballs of upper half space with Poincar\'{e} metric are strongly contracting. Using strongly contracting geodesics, Charney-Sultan \cite{Charney15} gave the notion of ‘contracting' boundary for a CAT$(0)$ space which is an analogue of the Gromov boundary. The concept of contracting projection has created a lot of interest in recent times, we refer the reader to look into the work of
Arzhantseva, Cashen, Gruber, and Hume in \cite{ACGH} for more details about the other contraction properties of subspaces.
Given a finite collection of groups satisfying a property $P$, the combination type problems deal with the question that  then when does the fundamental group of the graph  of groups formed by that collection satisfy the property $P$. One such case is the celebrated work of M.Bestvina and M.Feighn \cite{best} in 1992, where they have found a combination theorem of finite graphs of hyperbolic groups. Given a graph of groups $(\mathcal G,\Lambda)$ over a finite graph $\Lambda$, there exists a tree of spaces $X$ where the underlying tree is the Bass-Serre covering tree such that  the fundamental group $\pi_1(\mathcal G,\Lambda)$ of $(\mathcal G,\Lambda)$ acts properly and co-compactly on $X$. An action of a group $G$ on a simplicial tree $T$ is called \textit{acylindrical} if there exists $k\geq 1$ such that no non-trivial element of $G$ fixes point-wise a segment of length $k$ in $T$ . 
I.Kapovich in \cite{kapo} showed that  for a
finite graph of hyperbolic groups $(\mathcal G,\Lambda)$ with edge groups quasi-isometrically embedded in vertex groups if the fundamental group $\pi_1(\mathcal G,\Lambda)$
acts acylindrically on the Bass-Serre covering tree  then $\pi_1(\mathcal G,\Lambda)$ is hyperbolic and all vertex groups are quasiconvex in the $\pi_1(\mathcal G,\Lambda)$.
In this article we prove the following: \\ \\
\textbf{Theorem 1} \label{main intro}(See Theorem \ref{main})\textit{
Let $N\geq 1$ and $\sigma\geq 0$. There exists
$R=R(N,\sigma)\geq 1$ such that the following holds:
Let $X$ be a tree of proper geodesic spaces where all  edge spaces are
$\sigma$-contracting and uniformly $R$-separated. Let $\eta$ be a strongly contracting geodesic in a vertex space.
Then  every $(N,0)$-quasi-geodesic of $X$ with end points in $\eta$ lies in a bounded
neighbourhood of $\eta$ where the bound depends only on $N,\sigma$ and the contraction constant of $\eta$.}\\ \\
A subspace $Y$ of $X$ is said to be $k$-\textit{quasiconvex} if  all geodesics of $X$ with endpoints in $Y$ lie in a $k$-neighbourhood of $Y$.
By taking $N=1$ in Theorem \ref{main intro}, we have the following corollary:\\ \\
\textbf{Corollary :}\textit{
For all $\sigma\geq 0$ there exists  a number $R\geq 1$ depending only on $\sigma$ such that the following holds:
Let $X$ be a tree of proper geodesic spaces whose  edge spaces are $\sigma$-contracting and uniformly $R$-separated. 
Then every strongly contracting geodesic $\gamma$ of a vertex space is quasiconvex in $X$, where the quasiconvexity constant depends only on $\sigma$ and the contraction constant of $\gamma$.}\\ \\
As an application of Theorem \ref{main intro}, we will prove the following theorem in the last section. \\ \\
\textbf{Theorem 2}
(See Theorem \ref{hyp thm})\textit{
Let $X$ be a tree of proper geodesic spaces.
If all the vertex spaces are uniformly hyperbolic metric spaces, edge spaces are uniformly quasiconvex in vertex spaces
and they are sufficiently uniformly separated then the space $X$ is a hyperbolic metric space and vertex spaces are quasiconvex in $X$.}\\ \\
\noindent\textbf{Acknowledgements:} We are thankful to the anonymous referee for his/her valuable comments.

\section{Strong Contraction}

Let $(X,d)$ be a geodesic metric space and $Q$ be a subspace of $X$. The nearest point projection of a point $x\in X$ to $Q$ is given by
$\pi_Q(x)=\{z\in Q:d(x,z)=d(x,Q)\}$. If $Q$ is closed and $X$ is proper then $\pi_Q(x)$ is always non-empty.
\begin{defn}(Strongly Contracting Subspace) Let $\sigma\geq 0$. We say that $Q$ is $\sigma$-contracting if for all $x,y\in X$,
$$d(x,y)\leq d(x,Q)\implies\mbox{diam}(\pi_Q(x)\cup\pi_Q(y))\leq \sigma.$$ 
We say that $Q$ is strongly contracting if $Q$ is $\sigma$-contracting for some $\sigma\geq 0$. 
\end{defn}

\begin{thm}{( Geodesic Image Theorem \cite{ACGH}, Theorem 7.1)}\label{GProj}
 $Q$ is a $\sigma$-contracting subspace of a geodesic space $X$ if and only if there exists constants $\kappa_{\sigma}'$ and $\kappa_{\sigma}$
 such that for any geodesic $\gamma$ in $X$ lying outside $\kappa_{\sigma}$-neighborhood
 of $Q$, the diameter of $\pi_Q(\gamma)$ is at most $\kappa_{\sigma}'$, where $\kappa_{\sigma}'$ and $\kappa_{\sigma}$ depend only on $\sigma$.
\end{thm}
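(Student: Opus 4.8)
The plan is to prove both implications of the stated equivalence, with essentially all the content in the forward direction. Throughout I write $\rho(z)=d(z,Q)$ for $z\in X$, and I record the form of the hypothesis I will use: since $Q$ is closed and $X$ is proper, $\pi_Q(z)\neq\emptyset$, and $\sigma$-contraction says precisely that any metric ball disjoint from $Q$ has $\pi_Q$-image of diameter at most $\sigma$; equivalently, if $d(z,z')\le\rho(z)$ then $\mathrm{diam}\big(\pi_Q(z)\cup\pi_Q(z')\big)\le\sigma$, because $z$ and $z'$ both lie in the ball $B(z,\rho(z))$, which is disjoint from $Q$.

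For the forward direction I would prove the contrapositive: there is a constant $\kappa_\sigma$, depending only on $\sigma$, such that if a geodesic $\gamma$ has $\mathrm{diam}(\pi_Q(\gamma))$ large then $\gamma$ must enter the $\kappa_\sigma$-neighbourhood of $Q$. Fix a geodesic $\gamma=[x,y]$ and let $m\in\gamma$ realise $\min_{z\in\gamma}\rho(z)=:\rho_m$. The first step is the local estimate just recorded, which already shows that the portion of $\gamma$ lying inside $B(m,\rho_m)$ projects to a set of diameter at most $\sigma$. The second step is to walk outward from $m$ toward each endpoint, subdividing $\gamma$ at points $m=z_0,z_1,z_2,\dots$ with $d(z_i,z_{i+1})=\rho(z_i)$, so that consecutive projections are $\sigma$-close. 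The essential geometric input is that, since $m$ is the closest point of $\gamma$ to $Q$ and each step has length equal to the current distance to $Q$, the distances $\rho(z_i)$ grow at a definite rate as one moves away from $m$; hence the arc-length positions recede geometrically and the accumulated projection displacement is controlled by a convergent (geometric) sum rather than a term-by-term count. Combining the two outward walks bounds $\mathrm{diam}(\pi_Q(\gamma))$ by a constant $\kappa_\sigma'$ once $\rho_m\ge\kappa_\sigma$, as desired.

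For the backward direction I would assume the bounded geodesic image property with constants $\kappa_\sigma,\kappa_\sigma'$ and verify strong contraction for a suitable constant. It suffices to bound $\mathrm{diam}(\pi_Q(B))$ for a ball $B$ disjoint from $Q$. Given $u,v\in B$, run the geodesic $[u,v]$: if it stays outside $N_{\kappa_\sigma}(Q)$ then the hypothesis gives $\mathrm{diam}(\pi_Q([u,v]))\le\kappa_\sigma'$ directly, hence $d(\pi_Q(u),\pi_Q(v))\le\kappa_\sigma'$. If instead $[u,v]$ dips into $N_{\kappa_\sigma}(Q)$, one exploits that the segment is short relative to $\rho$ on $B$ and applies the triangle inequality at the dip point; this case is routine and yields a bound which, combined with the previous one, furnishes a uniform $\sigma$.

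\textbf{Main obstacle.} The crux is the uniform constant in the forward direction. A naive chaining of the $\sigma$-close consecutive projections bounds $\mathrm{diam}(\pi_Q(\gamma))$ only by $\sigma$ times the number of subdivision pieces, and that count grows with the length of $\gamma$; indeed, the same crude estimate would permit a bound that degrades (logarithmically) with length. Securing a bound \emph{independent} of the length of $\gamma$ is exactly where strong contraction is essential, as opposed to a merely Morse or Euclidean-type projection: one must show the projection displacement along the outward walk is summable, via the geometric growth of $\rho(z_i)$ away from the closest-point region, so that far geodesics of arbitrarily large (even infinite) length still have uniformly bounded projection.
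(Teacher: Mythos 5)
The genuine gap is in the forward direction, at exactly the step you flag as the crux. Your mechanism has two parts: (i) the distances $\rho(z_i)$ grow at a definite (geometric) rate along the outward walk, and (ii) this growth makes the accumulated projection displacement a convergent sum. Neither part works as stated. For (i): strong contraction does not give pointwise growth of $\rho$ near the closest point; the function $t\mapsto\rho(\gamma(t))$ is merely $1$-Lipschitz and can plateau in $[\rho_m,2\rho_m]$ over an arc length comparable to $\rho_m$, and the only way to rule out long plateaus is the very length-versus-projection comparison you have not yet established --- so growth is a consequence of the theorem, not an available input. For (ii), which is the fatal point: even granting doubling growth $\rho(z_{i+1})\ge 2\rho(z_i)$, the contraction hypothesis bounds each consecutive displacement $\mathrm{diam}(\pi_Q(z_i)\cup\pi_Q(z_{i+1}))$ only by the constant $\sigma$; nothing makes the per-step displacement decay with $i$. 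Geometric growth of the step lengths merely reduces the number of steps to roughly $\log\bigl(l(\gamma)/\rho_m\bigr)$, so chaining gives a bound of order $\sigma\log l(\gamma)$ --- precisely the length-dependent estimate you yourself say must be avoided. There is no convergent geometric sum here. (A rescue along your lines does exist: with doubling growth the balls $B(z_i,\rho(z_i))$ are nested, so the projection of the single largest ball, of diameter at most $\sigma$, absorbs all earlier points; but that is not the argument you describe, and it still presupposes (i).)

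The argument that actually closes this --- and it is the one the paper runs for its quasified version, Theorem \ref{BQI}, whose $K=1$ case is this forward direction (the paper itself only cites \cite{ACGH} for Theorem \ref{GProj}) --- is a comparison of the geodesic's length against the detour through the projections, with no growth or summability claim at all. Since $\gamma$ is a geodesic, $l(\gamma)=d(x,y)\le \rho(x)+\sum_{i}\mathrm{diam}\bigl(\pi_Q(z_i)\cup\pi_Q(z_{i+1})\bigr)+\rho(y)$, while by construction $l(\gamma)=\sum_i \rho(z_i)+(\text{last piece})$, with the last piece longer than $\rho(y)$. Subtracting, one gets $\sum_{i}\bigl(\rho(z_i)-\sigma\bigr)\le C\sigma$ for a universal $C$; if $\gamma$ stays outside the $2\sigma$-neighbourhood of $Q$, each term exceeds $\sigma$, which forces the number of subdivision pieces $r$ to be bounded independently of $l(\gamma)$, and then the naive chaining bound $(r+1)\sigma$ is uniform. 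Your backward direction is essentially fine (a geodesic $[u,v]$ with $d(u,v)\le d(u,Q)$ can enter $N_{\kappa_\sigma}(Q)$ only within distance about $\kappa_\sigma$ of $v$, and triangle inequalities finish), but the forward direction as proposed does not go through unless the growth/summability idea is replaced by this comparison.
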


For the sake of completeness, we give here a proof of a quasification of Geodesic Image Theorem \ref{GProj}. 
The proof is adaptation of the arguments
used in \cite{ACGH}.
 
\begin{thm}(Bounded Quasi-geodesic Image)\label{BQI} Let $Q$ be a $\sigma$-contracting
subspace of a geodesic space $X$. Suppose
$\beta:[0,l]\to X$ is a continuous $(K,0)$-quasi-geodesic lying outside
$D$-neighborhood of $Q$, where
$D=D(K,\sigma)=2([K]+1)\sigma$, $l=l(\beta)$ is the length of $\beta$ and $[0,l]$ is the arc
length parameterization of $\beta$ . 
Then diam($\pi_Q(\beta(0))\cup\pi_Q(\beta(l))< 4D$. 
\end{thm}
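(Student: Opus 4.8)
The plan is to mimic the proof of the Geodesic Image Theorem \ref{GProj}, the essential new feature being that a genuinely length--independent bound must be extracted from the quasigeodesic inequality. Throughout write $f(t)=d(\beta(t),Q)$, so that $f\ge D$ on $[0,l]$, and set $\Delta=\mathrm{diam}(\pi_Q(\beta(0))\cup\pi_Q(\beta(l)))$. First I would record the two consequences of $\rho$--contraction that drive the argument. Taking $y=x$ in the definition gives $\mathrm{diam}(\pi_Q(x))\le\rho$ for every $x$, and the definition itself is exactly the telescoping statement: if $d(z,z')\le d(z,Q)$ then $d(\pi_Q(z),\pi_Q(z'))\le\rho$. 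Since $\beta$ is parametrised by arclength it is $1$--Lipschitz, so whenever $|t-t'|\le D$ we have $d(\beta(t),\beta(t'))\le D\le f(t)$ and hence $d(\pi_Q(\beta(t)),\pi_Q(\beta(t')))\le\rho$.

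Next I would subdivide $[0,l]$ into $n=\lceil l/D\rceil$ consecutive subintervals of length at most $D$. By the telescoping step the projection moves by at most $\rho$ across each subinterval, so $\Delta\le n\rho+\rho\le (l/D+2)\rho$. The $(K,0)$--quasigeodesic inequality bounds the arclength by $l\le K\,d(\beta(0),\beta(l))$, whence $\Delta\le \tfrac{K\rho}{D}\,d(\beta(0),\beta(l))+2\rho$. Feeding in $D=2([K]+1)\rho$ turns the coefficient into $\tfrac{K}{2([K]+1)}<\tfrac12$, so $\Delta<\tfrac12 d(\beta(0),\beta(l))+2\rho$. Closing the loop with the triangle inequality $d(\beta(0),\beta(l))\le f(0)+\Delta+f(l)$ and absorbing the $\tfrac12\Delta$ term then yields the clean estimate $\Delta<f(0)+f(l)+4\rho$, which is already length--independent.

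The step I expect to be the main obstacle is upgrading this to the stated bound $\Delta<4D$, i.e. trading the a priori unbounded distances $f(0),f(l)$ of the endpoints from $Q$ for the constant $D$. The phenomenon to exploit is that the calibration $D=2([K]+1)\rho$ forbids $\beta$ from ``hugging'' a sphere about $Q$: to do so over a long arc would force the quasigeodesic constant to exceed $K$. Concretely I would pass to the point $w=\beta(t_0)$ realising $\min_t f(t)$, let $r=\pi_Q(w)$, and show by the contrapositive of the telescoping step --- each time two projected points are more than $\rho$ apart the corresponding points of $\beta$ are more than $D$ apart in $X$, and these points are ordered along $\beta$ so their consecutive distances sum to at most the arclength --- that any drift of the projection away from $r$ costs a definite amount of length, while the quasigeodesic inequality caps that length in terms of the distance actually travelled. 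Balancing the two with the value $D=2([K]+1)\rho$ should confine the whole projection $\pi_Q(\beta([0,l]))$ to within $2D$ of $r$, giving $\Delta<4D$. This divergence/no--hugging estimate, which is the quasigeodesic analogue of the separation used implicitly in \cite{ACGH} for Theorem \ref{GProj}, is where all the work lies; alternatively one may try to compare $\beta$ with the geodesic $[\beta(0),\beta(l)]$ and invoke \ref{GProj} directly, but that requires first showing this geodesic stays outside a neighbourhood of $Q$, which is essentially the same difficulty.
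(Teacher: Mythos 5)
Your first four steps are sound --- the telescoping over subintervals of length $D$, the bound $l\le K\,d(\beta(0),\beta(l))$, the observation $\tfrac{K}{2([K]+1)}<\tfrac12$, and the triangle inequality all check out --- but the estimate they produce, $\Delta<f(0)+f(l)+4\rho$, is not the theorem: $f(0)=d(\beta(0),Q)$ and $f(l)=d(\beta(l),Q)$ are unbounded, and eliminating them is the entire content of the statement. That elimination is exactly the step you defer to your final ``no-hugging'' sketch, and the accounting you propose there cannot close the argument. Run it out: if $t_0<t_1<\dots<t_m$ are successive times at which the projection drifts by more than $\rho$, each such jump costs arc length at least $f(t_i)\ge D$, so $l\ge mD$; on the other side the quasigeodesic inequality gives $l\le K\,d(\beta(0),\beta(l))\le K\bigl(f(0)+\Delta+f(l)\bigr)$ with $\Delta\lesssim 2\rho(m+1)$. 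Since $D=2([K]+1)\rho>2K\rho$, absorbing the $K\Delta$ term leaves $m\lesssim K\bigl(f(0)+f(l)\bigr)/(2\rho)$, so the resulting bound on $\Delta$ is again of order $K\bigl(f(0)+f(l)\bigr)$ --- no better than what you already had. The point is structural: any scheme in which a unit of projection-drift costs a \emph{fixed} amount of length (whether $D$ or the local $f(t_i)$) is doomed, because the competing upper bound $K\bigl(f(0)+\Delta+f(l)\bigr)$ itself grows with the endpoint distances; nothing in your ledger cancels those terms.

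The paper's proof supplies precisely the missing mechanism: a decomposition whose pieces have arc length proportional to $K$ times the local distance to $Q$, anchored at the endpoints. Concretely, the first piece has length exactly $K\,d(\beta(0),Q)$, each subsequent piece has length $K\,d(\beta(t_i),Q)$, and the process stops at the last index $r$ with $l(\beta|_{[t_r,l]})>K\bigl(d(\beta(t_r),Q)+d(\beta(l),Q)\bigr)$. A preliminary case (the paper's Case (i), proved by cutting $\beta$ at its closest point to $Q$ and subdividing into pieces of length equal to that minimal distance, so that at most about $K$ pieces are needed \emph{at every scale}) shows each such piece contributes at most $D$ to the projection diameter. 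Then the exact length identity
\begin{equation*}
l \;=\; K f(0)+\sum_{i=1}^{r-1}K f(t_i)+l(\beta_r)
\end{equation*}
is played against the quasigeodesic bound
\begin{equation*}
l \;\le\; K f(0)+K\sum_{i=0}^{r}\mathrm{diam}\bigl(\pi_Q(\beta(t_i))\cup\pi_Q(\beta(t_{i+1}))\bigr)+K f(l),
\end{equation*}
and here the term $K f(0)$ cancels exactly, while $l(\beta_r)>K f(l)$ absorbs $K f(l)$. What survives is $\sum_{i=1}^{r-1}\bigl(f(t_i)-D\bigr)<2D$, which together with $f(t_i)-D>D$ forces $r<3$, whence $\Delta\le (r+1)D<4D$. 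It is this cancellation of the endpoint terms --- achieved by calibrating piece lengths to $K f(t_i)$ rather than to a constant --- that your proposal lacks; and your suggested alternative (compare $\beta$ with the geodesic $[\beta(0),\beta(l)]$ and quote Theorem \ref{GProj}) is, as you yourself note, the same difficulty in another guise.
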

\begin{proof}
 Let $\beta(0)=x$ and $\beta(l)=y$.\\
 Case(i): $l(\beta)\leq K(d(x,Q)+d(y,Q))$. \\
 It imply either $l(\beta)\leq \frac{K}{2}d(x,Q)$ or $l(\beta)\leq
\frac{K}{2}d(y,Q)$. \\
 Suppose \begin{eqnarray}
          l(\beta)\leq \frac{K}{2}d(x,Q)
          \label{1}
         \end{eqnarray}

 Let us first assume the distance between $\beta$ and $Q$ is realised at $x$ i.e.
 $d(\beta([0,l]),Q)=d(x,Q)$. Let $0=t_0<t_1<...<t_n=l$ be a partition  of $[0,l]$
such that $l(\beta|_{[t_{i},t_{i+1}]})=d(x,Q)$ for all $0\leq i\leq n-2$
 and $l(\beta|_{[t_{n-1},t_n]})\leq d(x,Q)$. Note that $n$ is at most
$[\frac{K}{2}]+1$. For each $i$, 
 $d(\beta(t_i),\beta(t_{i+1}))\leq l(\beta|_{[t_{i},t_{i+1}]})=d(x,Q)\leq
d(\beta(t_i),Q)$. $Q$ is $\sigma$-contracting implies
 $\mbox{diam }(\pi_Q(\beta(t_i))\cup\pi_Q(\beta(t_{i+1})))\leq \sigma$ for all $i$.
 $$\mbox{diam}~(\pi_Q(x))\cup\pi_Q(y))\leq \sum\limits_{i=0}^{n-1}\mbox{diam
}(\pi_Q(\beta(t_i))\cup\pi_Q(\beta(t_{i+1})))\leq ([\frac{K}{2}]+1)\sigma.$$
 If  distance between $\beta$ and $Q$ is not realised at $x$ then let $s\in[0,l]$
such that $d(\beta([0,l]),Q)=d(\beta(s),Q)$. Divide $\beta$ in two parts
 $\beta'$ and $\beta''$ such that $\beta'=\beta|_{[0,s]}$ and
$\beta''=\beta|_{[s,l]}$. After suitable re-parametrization of $\beta'$ and
$\beta''$ 
 we have that distances $d(\beta',Q), d(\beta'',Q)$ are realized at the starting
points of $\beta',\beta''$ respectively. As above, union of nearest point
 projections of end points of $\beta',\beta''$ have diameters at most
$[\frac{K}{2}]+1$. So,  $l(\beta)\leq \frac{K}{2}d(x,Q)$
 implies $\mbox{diam}~(\pi_Q(x))\cup\pi_Q(y))\leq 2([\frac{K}{2}]+1)$. For
$l(\beta)\leq \frac{K}{2}d(y,Q)$ we take the re-parametrization
 $\bar\beta(t)=\beta(l-t)$, $t\in[0,l]$, and repeat the same argument for $\bar\beta$
as above.\\
 Case(ii) : $l(\beta)>K(d(x,Q)+d(y,Q))$. \\
 Let $t_1\in [0,l]$ be such that $l(\beta|_{[0,t_1]})=Kd(x,Q)$. Then our assumption
forces $l(\beta|_{[t_1,l]})>Kd(y,Q)$.
 By replacing $\frac{K}{2}$ with $K$ in the inequality \eqref{1} and repeating the
same argument as in case (i), 
 we have $\mbox{diam }(\pi_Q(\beta(0))\cup\pi_Q(\beta(t_{1})))\leq 2([K]+1)\sigma$. 
We will define inductively
 the points $t_i$ in $[0,l]$ such that they form a partition of $[0,l]$ and
$l(\beta|_{[t_{i-1},t_i]})=Kd(\beta(t_{i-1},Q))$. 
 Suppose we have defined the points $t_0,t_1,...,t_i$. If $l(\beta|_{[t_i,l]})\leq
K(d(\beta(t_i),Q)+d(y,Q))$ then by case (i)
 $\mbox{diam }(\pi_Q(\beta(t_i))\cup\pi_Q(\beta(l)))\leq 2([\frac{K}{2}]+1)\sigma$ and
we define $t_{i+1}=l$.
 Otherwise if $l(\beta|_{[t_i,l]})>K(d(\beta(t_i),Q)+d(y,Q))$ define
$t_{i+1}\in[t_i,l]$ to be the point such that
 $l(\beta|_{[t_i,t_{i+1}]})=Kd(\beta(t_i),Q)$. Again this will imply
$l(\beta|_{[t_{i+1},l]})>Kd(y,Q)$. Let $t_r$ be the last index
 for which $l(\beta|_{[t_r,l]})>K(d(\beta(t_r),Q)+d(y,Q))$ and define $t_{r+1}=l$.
Let $D=2([K]+1)\sigma$ and 
 $\beta_i=\beta|_{[t_i,t_{i+1}]}$ for all $i\in\{0,1,...,r\}$.
 \begin{eqnarray}
  l(\beta)&=&\sum\limits_{i=0}^rl(\beta_i)=\sum\limits_{i=0}^{r-1}Kd(\beta(t_i),Q)+l(\beta_r)\\
  \label{2} l(\beta)&=& Kd(x,Q)+\sum\limits_{i=1}^{r-1}Kd(\beta(t_i),Q)+l(\beta_r)
 \end{eqnarray}
Also, $\beta$ being $(K,0)$-quasi-geodesic, we have
\begin{eqnarray}
 l(\beta)&\leq& Kd(x,y)\\
\label{3} l(\beta)&\leq& Kd(x,\pi_Q(x))+K\sum\limits_{i=0}^r\mbox{diam
}(\pi_Q(\beta(t_i))\cup\pi_Q(\beta(t_{i+1})))+Kd(\pi_Q(y),y)
\end{eqnarray}
From \eqref{2}\& \eqref{3} and as $d(x,Q)=d(x,\pi_Q(x))$, we have
\begin{eqnarray*}
 \sum\limits_{i=1}^{r-1}Kd(\beta(t_i),Q)+l(\beta_r)&\leq&
 K\sum\limits_{i=0}^r\mbox{diam }(\pi_Q(\beta(t_i))\cup\pi_Q(\beta(t_{i+1})))+Kd(y,Q)\\
 &\leq& 2KD+(r-1)KD+Kd(y,Q)
\end{eqnarray*}

Thus, \begin{eqnarray}\label{4}
       \sum\limits_{i=1}^{r-1}K(d(\beta(t_i),Q)-D)\leq 2KD-(l(\beta_r)-Kd(y,Q)).
      \end{eqnarray}
By construction of the point $t_r$, $l(\beta_r)-Kd(y,Q)>0$, so from \eqref{4} we have 
\begin{eqnarray}\label{5}
 \sum\limits_{i=1}^{r-1}(d(\beta(t_i),Q)-D)<2D
\end{eqnarray}
As per hypothesis, $\beta$ lies outside $2D$-neighborhood of $Q$, so $d(\beta,Q)>2D$.
This implies
$d(\beta(t_i),Q)>2D$ and hence $D<d(\beta(t_i),Q)-D$.
So, from \eqref{5}, we have
\begin{eqnarray*}
 \mbox{diam}(\pi_Q(x)\cup\pi_Q(y))&\leq&
\sum\limits_{i=0}^r\mbox{diam}(\pi_Q(\beta(t_i))\cup\pi_Q(\beta(t_{i+1})))\\
 &\leq& (r+1)D\\
 &<& \sum\limits_{i=1}^{r-1}(d(\beta(t_i),Q)-D) + 2D\\
 &<& 2D+2D=4D.
\end{eqnarray*}
\end{proof}
\noindent\textbf{Notation :} For a path $\alpha:[a,b]\to X$  with $p=\alpha(s)$, $q=\alpha(t)$ and $s<t$, we denote $\alpha|_p^q$ to be the subsegment of $\alpha$ between $p$ and $q$. We denote $\ell_X(\alpha|_p^q)$ to be the length of $\alpha|_p^q$ in $X$.
 
\begin{lem}\label{closeness}
Let $Q$ be $\sigma$-contracting and properly embedded in a geodesic metric space $(X,d)$. Then \\
(i) any $(N,0)$-quasi-geodesic between two points of $Q$ lies in a $M=M(N,\sigma)$-neighbourhood of $Q$, 
where $M(N,\sigma)=(3N+1)D(N,\sigma)$ and $D=D(N,\sigma)$ is the constant from Theorem \ref{BQI}, depends only on $N$ and $\sigma$.\\
(ii) there exists $K\geq 1$ depending only on $\sigma$ such that $Q$ is $(K,K)$-quasi-isometrically embedded in $X$.

\end{lem}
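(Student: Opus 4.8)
The plan is to prove the two parts in order, using (i) as the engine for (ii); throughout write $d=d_X$ for the ambient metric and $d_E$ for the intrinsic path metric on $E$. For (i), fix an $X$-geodesic $\gamma$ joining $p,q\in E$ and let $z\in\gamma$ realise $M:=\max_{x\in\gamma}d(x,E)$; such a $z$ exists because $\gamma$ is compact and $x\mapsto d(x,E)$ is $1$-Lipschitz. If $M\le\kappa_{\sigma}$ we are done, so assume $M>\kappa_{\sigma}$. First I would isolate the maximal excursion of $\gamma$ outside the $\kappa_{\sigma}$-neighbourhood of $E$ around $z$: let $a$ be the last point of $\gamma$ before $z$ and $b$ the first point after $z$ with $d(a,E)=d(b,E)=\kappa_{\sigma}$, so that $\gamma|_{[a,b]}$ is a geodesic lying outside the $\kappa_{\sigma}$-neighbourhood of $E$, and Theorem \ref{GProj} gives $\mbox{diam}(\pi_E(\gamma|_{[a,b]}))\le\kappa_{\sigma}'$.

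The estimate is then a triangle-inequality squeeze. Since $\gamma$ is geodesic and passes through $z$, we have $d(a,b)=d(a,z)+d(z,b)\ge 2(M-\kappa_{\sigma})$, using $d(a,z)\ge d(z,E)-d(a,E)=M-\kappa_{\sigma}$ and symmetrically at $b$. On the other hand, choosing nearest-point projections, $d(a,b)\le d(a,\pi_E(a))+d(\pi_E(a),\pi_E(b))+d(\pi_E(b),b)\le\kappa_{\sigma}+\kappa_{\sigma}'+\kappa_{\sigma}$. Combining yields $M\le 2\kappa_{\sigma}+\tfrac12\kappa_{\sigma}'\le 3\kappa_{\sigma}+\kappa_{\sigma}'$, which is (i). The one delicate point is that $a,b$ sit exactly on the sphere of radius $\kappa_{\sigma}$ about $E$; to invoke Theorem \ref{GProj} cleanly one either passes to points slightly interior to $[a,b]$ and lets them converge to $a,b$, or simply absorbs the gap into the slack between $2\kappa_{\sigma}+\tfrac12\kappa_{\sigma}'$ and the stated bound $3\kappa_{\sigma}+\kappa_{\sigma}'$.

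For (ii) I would push an $X$-geodesic into $E$ using (i). Set $C:=3\kappa_{\sigma}+\kappa_{\sigma}'$. The bound $d_X\le d_E$ is automatic since every $E$-path is an $X$-path, so only the lower quasi-isometry inequality needs work. Given $p,q\in E$, parametrise an $X$-geodesic $\gamma:[0,L]\to X$ by arclength with $L=d_X(p,q)$ and choose a partition $0=t_0<\dots<t_N=L$ with $t_{i+1}-t_i\le 1$ and $N\le L+1$. By (i) each $\gamma(t_i)$ lies within $C$ of $E$; choose $e_i\in E$ with $d_X(\gamma(t_i),e_i)\le C$, taking $e_0=p$ and $e_N=q$, so that consecutive points satisfy $d_X(e_i,e_{i+1})\le 2C+1$. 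Summing intrinsic distances and using the local bound $d_E(e_i,e_{i+1})\le M_0$ discussed below gives $d_E(p,q)\le\sum_i d_E(e_i,e_{i+1})\le N M_0\le M_0\,d_X(p,q)+M_0$, and with $K=\max\{M_0,1\}$ this rearranges to $\tfrac1K d_E(p,q)-K\le d_X(p,q)\le d_E(p,q)$, the desired $(K,K)$-quasi-isometric embedding.

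The crux is this local bound $d_E(e_i,e_{i+1})\le M_0$: it asks that two points of $E$ at $X$-distance at most $2C+1$ be at bounded $E$-distance, and this is exactly where proper embedding enters, since the contracting hypothesis controls projections of points lying outside $E$ but says nothing about intrinsic distances between points of $E$ themselves. Because $2C+1$ depends only on $\sigma$, the number of pieces is governed by $\sigma$ and the distortion is pinned down by $\sigma$ together with the modulus of the proper embedding at the single scale $2C+1$; in the paper's uniform setting this modulus is common to all edge spaces, which is what makes $K$ depend only on $\sigma$. I expect the main point to get right to be precisely this interplay, namely extracting a genuinely linear bound from mere properness by capping the relevant scale through part (i).
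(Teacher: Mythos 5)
Your proposal is correct and follows essentially the same route as the paper: part (i) applies the Geodesic Image Theorem to a maximal excursion of the geodesic outside the $\kappa_{\sigma}$-neighbourhood and squeezes with the projection triangle inequality (your midpoint bookkeeping even yields the slightly sharper bound $2\kappa_{\sigma}+\tfrac12\kappa_{\sigma}'$), and part (ii) is exactly the paper's argument of cutting the geodesic into unit pieces, pushing the cut points to $E$ via (i), and invoking proper embedding at the single scale $2C+1$ to bound the intrinsic gaps before summing. Your closing remark about $K$ depending on the properness modulus (uniform in the paper's setting) is also the correct reading of where that constant comes from.
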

\begin{proof}
(i) Let $x,y\in Q$ and $\gamma$ be $(N,0)$-quasi-geodesic between $x$ and $y$. Let $\gamma|_{p}^{q}$ be a maximal connected subsegment of $\gamma$ which lie outside 
 $D$ neighbourhood of $Q$, then 
 \begin{center}
    $d(p,Q)=D=d(q,Q)$ and $diam(\pi_{Q}(p),\pi_{Q}(q))<4D$  (By Theorem \ref{BQI})
 \end{center}
 So $\ell_{X}(\gamma|_{p}^{q})\leq Nd(p,q)<N(2D+4D)=6ND$. 
 Now for any $z\in \gamma|_{p}^{q}$ then either $\ell_{X}(\gamma|_{p}^{z})<3ND$ or $\ell_{X}(\gamma|_{z}^{q})<3ND$ and therefore $d(z,Q)<3ND+D=(3N+1)D$. This holds for any maximal connected subsegment of $\gamma$ which lie outside $D$-neighbourhood of $Q$. Hence $\gamma$ lies in the $M$-neighbourhood of $Q$.
 \\
 (ii)  Let $x,y\in Q$ and $\gamma:[0,l]\to X$ be a $X$-geodesic with $\gamma(0)=x$ and $\gamma(l)=y$. Let $x_i=\gamma(i)$ where $i\in\{1,...,[l]\}$.
 Then $d(x,x_1)=1,d(x_i,x_{i+1})=1,...,d(x_{[l]},x_l)\leq 1$. From \textit{(i)}, $\gamma$ lies in $4D(1,\sigma)$-neighbourhood of $Q$, as geodesics are $(1,0)$-quasi-geodesics. Hence, $d(x_i,\pi_Q(x_i))\leq 4D(1,\sigma)$. This implies, $d(\pi_Q(x_i),\pi_Q(x_{i+1}))\leq 8D(1,\sigma)+1$. As $Q$ is properly embedded in $X$, there exists $K=K(D(1,\sigma))\geq 1$ such that $$d_Q(\pi_Q(x_i),\pi_Q(x_{i+1}))\leq K.$$
 Now consecutively joining $\pi_Q(x_i)$'s by a $Q$-geodesic we obtain a path $\Tilde{\gamma}$ between $x$ and $y$ in $Q$.
 And we have $d_Q(x,y)\leq \ell_{X}(\Tilde{\gamma}) \leq [l]K+K\leq Kd(x,y)+K.$

\end{proof}
\begin{lem}\label{Proj}
Let $\sigma\geq 0$. Let $C_{1}:=5D(1,\sigma)$ and $D_{1}:=8D(1,\sigma)$, where $D(1,\sigma)$ is the constant from Theorem \ref{BQI} for $N=1$. Let $Q_1$ and $Q_2$ be $\sigma$-contracting in $X$ then
 \begin{center}
     $d(Q_1,Q_2)>C_{1} \implies diam(\pi_{Q_1}(Q_2))<D_{1}$.
 \end{center}
\end{lem}
\begin{proof}
 Fix a point $x_{0}\in Q_2$. Let $x$ be an arbitrary point in $Q_2$ and $\gamma$ be a $X$-geodesic between $x_{0}$ and $x$. 
 Then by Lemma \ref{closeness}, $\gamma$ lies in $4D(1,\sigma)$-neighbourhood of $Q_2$ (Note that geodesics are $(1,0)$-quasi-geodesics).
 
 Now $d(Q_1,Q_2)>C_{1}=5D(1,\sigma)$ implies $d(\gamma,Q_1)>D(1,\sigma)$. Then by Theorem \ref{BQI}
 \begin{center}
     $diam(\pi_{Q_1}(x_0)\cup\pi_{Q_1}(x))<4D(1,\sigma)$.
 \end{center}
 Hence $diam(\pi_{Q_1}(Q_2))<2\times 4D(1,\sigma)=8D(1,\sigma)=D_1$.
\end{proof}

\subsection{Tree of Spaces}

\begin{defn} \label{tree}(1) (Tree of Geodesic Spaces:) Let $(X,d_X)$ be a proper geodesic metric space.
$P: X \rightarrow T$ is said to be a  tree of geodesic
 metric spaces if $X$
admits a surjective map $P : X \rightarrow T$ onto a simplicial tree $T$, such that the following holds:\\
i) For all  $s\in{T}$, 
$X_s= P^{-1}(s) \subset X$ with the induced path metric $d_{X_s}$ is 
 a geodesic metric space $X_s$.\\
(ii) For a vertex $v$ in $T$, $X_v=P^{-1}(v)$ will be called as vertex space for $v$.  Let $e$ be an edge of $T$ between vertices $v_1$ and
$v_2$.
Let $X_e$ be the preimage under $P$ of the mid-point of  $e$, $X_e$ will be called as edge space for $e$.
 There exist a continuous map ${f_e}:{X_e}{\times}[0,1]\rightarrow{X}$, such that
$f_e{|}_{{X_e}{\times}(0,1)}$ is an isometry onto the preimage of the
interior of $e$ equipped with the path metric. 
The maps ${f_e}|_{{X_e}{\times}\{{0}\}}$ and 
${f_e}|_{{X_e}{\times}\{{1}\}}$ are proper
embeddings into $X_{v_1}$ and $X_{v_2}$ respectively. \\ \\
(2) (Edge Spaces Separation:) Let $P:X\to T$ be a tree of geodesic metric spaces. For  an edge $e$ of $T$ between vertices $v_1$ and $v_2$, let $f_{e,v_1}:X_e\to X_{v_1}$ be defined by $f_{e,v_1}(x)=f_e(x,0)$ and $f_{e,v_2}:X_e\to X_{v_2}$ be defined by $f_{e,v_2}(x)=f_e(x,1)$.
The edge spaces in  $P:X\to T$ are said to be $R$-\textit{uniformly separated}
if for  any two distinct edges $e,e^{'}$ of $T$ with a common vertex $v$ we have 
$d_X(f_{e,v}(X_e),f_{e',v}(X_{e'})\geq R$.

\end{defn}
\begin{defn} (Strongly contracting edge spaces) Let $\sigma\geq 0$. We say that the edge spaces of a tree of spaces $P:X\to T$
are $\sigma $-contracting in vertex spaces if for all edge spaces $X_e$, ${f_e}({{X_e}{\times}\{{0}\}})$ and 
${f_e}({{X_e}{\times}\{{1}\}})$ are $\sigma$-contracting in respective vertex spaces.
 
\end{defn}

\section {Main Theorem}

Let $P:X\to T$ be a tree of proper geodesic spaces.
The edge space $X_e$ corresponding to an edge $e$ defined in Definition \ref{tree} are identified to respective end vertex spaces with the help of maps ${f_e}|_{{X_e}{\times}\{{0}\}}$ and 
${f_e}|_{{X_e}{\times}\{{1}\}}$, so we will call ${f_e}({{X_e}{\times}\{{0}\}})$ and 
${f_e}({{X_e}{\times}\{{1}\}})$ also to be edge spaces.
For an  edge $e$, we denote $f_e(X_e\times\{0\})$ by $Y^-_e$ and $f_e(X_e\times\{1\})$ by $Y^+_e$.

\begin{lem}\label{Edge proj} 
Let $N\geq 1$ and $\sigma\geq 0$. There exists
$R=R(N,\sigma)\geq 1$ such that the following holds:
Let $P:X\to T$ be a tree of proper geodesic spaces where all  edge spaces are
$\sigma$-contracting and uniformly $R$-separated. Let $e$ be a directed edge with terminal vertex $v$. Let  $x,y$ be two points in the edge space $Y^+_e$. Let $\gamma:[0,l]\to X$ be a 
$(N,0)$-quasi-geodesic with $\gamma(0)=x,\gamma(l)=y$
such that $P(\gamma)\cap e=\{v\}$ and for all $t\in (0,l)$, $\gamma(t)\not\in Y^+_e$. Then $\gamma$ does not intersect any other edge space other than $Y^+_e$.
\end{lem}
\begin{proof}
Let $C_N=5D(N,\sigma)$, where the constant $D(N,\sigma)$ is as in Theorem \ref{BQI}. Let $R=R(N,\sigma)=(2N+1)C_N+2ND_1+12D(N,\sigma) +1$, where 
$D_1=8D(1,\sigma)$ is the constant from Lemma \ref{Proj}.
If possible, let $\gamma$ intersect other edge spaces non-trivially. Let $e_1,...,e_m$ be the edges  incident on $v$ such that $\gamma$ intersects the edge spaces $Y^-_{e_1},...,Y^-_{e_m}$.
For each $Y^-_{e_i}$ intersected by $\gamma$, let $\gamma(s_i)$ be the first entry point and $\gamma(t_i)$ be the
last exit point. Replace the portion $\gamma|_{(s_i,t_i)}$ by a geodesic in $Y^-_{e_{i}}$ joining $\gamma(s_i)$ and $\gamma(t_i)$.
This results in a path $\gamma_v$ in $X_v$ joining $x$ and $y$.

  Consider the $C_N$-neighbourhood of $Y^+_e$ in $X_v$.
If possible, let there exists a maximal connected subsegment, $\gamma'_{v}$, of $\gamma_v$ which lie outside the $C_N$-neighbourhood of $Y^+_e$. Let $n$ be the number of edge spaces intersected by $\gamma'_{v}$.
Let $p$ and $q$ be the endpoints of $\gamma'_{v}$.
Since $d_X(Y^+_e,Y^-_{e_i})\geq R$, where $R>C_N>C_1$, so $p$ and $q$ will lie in $\gamma\cap X_v$.
As $\gamma'_{v}$ is the maximal connected subsegment of $\gamma_v$ lying outside the $C_N$-neighbourhood of $Y^+_e$, we have
$d_{X_v}(p,Y^+_e)=C_N=d_{X_v}(q,Y^+_e)$. As $Y^+_e$ is $\sigma$-contracting, by Lemma \ref{Proj} the diameter of projection of $Y^-_{e_i}$ into $Y^+_e$ is at most $D_1$ and the diameter of projection into $Y^+_e$ of a component of $\gamma\cap X_v$ lying outside the $C_N$-neighbourhood of $Y^+_e$ is at most $4D(N,\sigma)$ by Theorem \ref{GProj}.

Now projecting $\gamma'_{v}$ to $Y^+_e$, we get 
\begin{center}
    $(n-1)R\leq \ell_X(\gamma |_p^q)\leq N d_{X}(p,q)\leq Nd_{X_{v}}(p,q)\leq N(2C_N+nD_1+(n+1)(4D(N,\sigma)))$
\end{center}
Case (i) : Suppose $n=1$. Then $\ell_X(\gamma |_p^q)\leq N d_{X}(p,q)\leq Nd_{X_{v}}(p,q)\leq N(2C_N+D_1+8D(N,\sigma))$. 
So, the length of the portion of the quasi-geodesic $\gamma$ between $p$ and $\gamma(s_1)$ is at most $ N(2C_N+D_1+8D(N,\sigma))$ and hence $d_X(p,Y^-_{e_1})\leq N(2C_N+D_1+8D(N,\sigma))$.
This implies, $$R\leq d_X(Y^+_e,Y^-_{e_1})\leq d_X(p,Y^+_e)+d_X(p,Y^-_{e_1})\leq  C_N+N(2C_N+D_1+8D(N,\sigma))$$ Thus $R\leq (2N+1)C_N+ND_1+8D(N,\sigma)$ which is a contradiction.\\
Case (ii) : Suppose $n>1$. Then
\begin{eqnarray*}
   R&\leq&\dfrac{2NC_N+ND_1+8D(N,\sigma)}{n-1}+ND_1+4D(N,\sigma))\\&<&(2N+1)C_N+2ND_1+12D(N,\sigma) 
\end{eqnarray*}
which is a contradiction. \\
Hence, $\gamma_v\subset Nbhd({Y^+_e};C_N)\subset X_v$. As edge spaces are $R$-separated and $R>C_N$,
$\gamma_v$ does not intersect any other edge space other than $Y^+_e$. This also implies that, $\gamma_v=\gamma$ and hence $\gamma$ does not intersect any other incident edge spaces.
\end{proof}

\noindent {The underlying idea of Lemma \ref{Edge proj} is present in the work of  Szczepanski \cite{szczepanski}.}

\begin{thm}\label{main}
Let $N\geq 1$ and $\sigma\geq 0$. There exists
$R=R(N,\sigma)\geq 1$ such that the following holds:
Let $P:X\to T$ be a tree of proper geodesic spaces where all  edge spaces are
$\sigma$-contracting and uniformly $R$-separated. Let $\eta$ be a strongly contracting geodesic in a vertex space.
Then  every $(N,0)$-quasi-geodesic of $X$ with end points in $\eta$ lies in a bounded
neighbouhood of $\eta$ where the bound depends only on $N,\sigma$ and the contraction constant of $\eta$.

\end{thm}
\begin{proof}
Let $\eta$ be a $\rho$-contracting geodesic of a vertex space $X_v$ for some $\rho\geq0$. Let $x,y\in \eta$. 
Let $\alpha:[0,l]\to X$ be an arc length parameterization of a $(N,0)$-quasi-geodesic of $X$ with $\alpha(0)=x,\alpha(l)=y$. 
Without loss of generality,  we assume $\alpha(t)\not\in \eta$, for all $t\in (0,l)$, i.e. $\alpha$ does not intersect $\eta$ other than the end points. Then
$P(\alpha)$ will have diameter at most one by Lemma \ref{Edge proj}. 

Let $s(1),s(2),...,s(l)$ be the edges of $P(\alpha)$ incident on $v$. Let $v_i$ be another vertex of $s(i)$. If $\alpha_{i_k}$
is a portion of $\alpha$ such that the end points of $\alpha_{i_k}$ lie in $f_{s(i)}(X_{s(i)}\times\{1\})$ and $P(\alpha_{i_k})\cap s(i)=\{v_i\}$
then from Lemma \ref{Edge proj}, $\alpha_{i_k}$  does not intersect other edge spaces in $X_{v_i}$ and $\alpha_{i_k}$ lies in $X_{v_i}$.
Let $a_{i_k}, b_{i_k}$ be end points of $\alpha_{i_k}$. Then $\ell_{X_{v_i}}(\alpha_{i_k})=\ell_X(\alpha_{i_k})\leq Nd_X(a_{i_k},b_{i_k})\leq Nd_{X_{v_i}}(a_{i_k},b_{i_k})$. This implies  $\alpha_{i_k}$ is a $(N,0)$-quasi-geodesic in $X_{v_i}$. Similarly, if  $\alpha_{i_k}$
is a portion of $\alpha$ such that the end points of $\alpha_{i_k}$ lie in $f_{s(i)}(X_{s(i)}\times\{0\})$ and $P(\alpha_{i_k})\cap s(i)=\{v\}$ then $\alpha_{i_k}$ is a $(N,0)$-quasi-geodesic in $X_v$.
Let $\beta_i$ be the maximal subsegment of $\alpha$, containing all such $\alpha_{i_k}$'s, such that the end points of $\beta_i$ lie in $f_{s(i)}(X_{s(i)}\times\{0\})=Y^-_{s(i)}$. The Hausdorff distance between $Y^-_{s(i)}$ and  $Y^+_{s(i)}$ is at most one and as $Y^-_{s(i)},Y^+_{s(i)}$ are $\sigma$-contracting in respective vertex spaces, so by Lemma \ref{closeness} there exists $M>0$ depending on $N,\sigma$ such that $\beta_i$ lies in the $M+1$-neighbourhood of $Y^-_{s(i)}$. Let $M'=M+1$.
Let $p_i,q_i$ be end points of $\beta_i$. Then $p_i,q_i\in Y^-_{s(i)}\subset X_v$. Now following the proof of Lemma \ref{closeness} (ii), we divide $\beta_i$ into $\beta^1_i\cup\beta^2_i\cup...\cup \beta^{n(i)}_i$ such that the length of $\beta_i^j$ is one for all $j=1,2,...,n(i)-1$ and the length of $\beta_i^{n(i)}$ is at most one.  Let $x_i^j$ and $x_i^{j+1}$ be the end points of $\beta_i^j$.
Now for each $j$ there exists $y_i^j\in Y^-_{s(i)}$ such that $d_X\big(x_i^j,y_i^j\big)\leq M'$. 
Note that $x_i^1=y_i^1=p_i$ and $x_i^{n(i)+1}=y_i^{n(i)+1}=q_i$ and $d_X(y_i^j,y_i^{j+1})\leq 2M'+1$ for all $j$. Since $X_v$ is properly embedded in $X$, there exists $K'=K^{'}(M')\geq 1$ such that $d_{X_{v}}(y_i^j,y_i^{j+1})\leq K^{'}$, for all $j$. Successively joining $y_i^j$ and $y_{i}^{j+1}$  by a geodesic in $X_v$ for every $j$  we get a path $\Tilde{\beta_i}$ between $p_i$ and $q_i$ in $X_v$ such that $\beta_i$ and $\Tilde{\beta_i}$  lie in $M'+K'$-neighbourhood of each other. Now 
$$\ell_{X_{v}}(\Tilde{\beta_{i}}|_{y_i^{j}}^{y_i^{j'}})=\sum\limits_{r=j}^{j'} d_{X_v}(y_i^r,y_i^{r+1})\leq K'(j'-j)\leq
K^{'}\ell_X(\beta_{i}|_{x_i^{j}}^{x_i^{j'}})+K'.$$

\noindent For any $y_i,y'_i\in\Tilde{\beta_{i}}$ there exists $y^j_i,y^{j'}\in\Tilde{\beta_{i}}$ such that $\Tilde{\beta_{i}}|_{y_i}^{y'_i}\subset \Tilde{\beta_{i}}|_{y_i^{j}}^{y_i^{j'}}$, 
$d_{X_v}(y_i,y^j_i)\leq K'$ and $d_{X_v}(y'_i,y^{j'}_i)\leq K'$. Thus,
 $d_X(y_i,x^j_i)\leq M'+K'$, $d_X(y'_i,x^{j'}_i)\leq M'+K'$ and
$$\ell_{X_{v}}(\Tilde{\beta_{i}}|_{y_i}^{y'_i})\leq \ell_{X_{v}}(\Tilde{\beta_{i}}|_{y_i^{j}}^{y_i^{j'}})\leq K^{'}\ell_X(\beta_{i}|_{x^{j}_i}^{x^{j'}_i})+K^{'}.$$
The above process  results in a truncated path $\Tilde{\beta}$ in $X_v$ which is concatenation of $\Tilde{\beta_i}$'s and subsegments of $\alpha$, say $\xi_i$, which lies in $X_v$ connecting two consecutive $\Tilde{\beta_i}$'s (See Figure-1).
\begin{center}
\includegraphics[width=11cm,height=6cm]{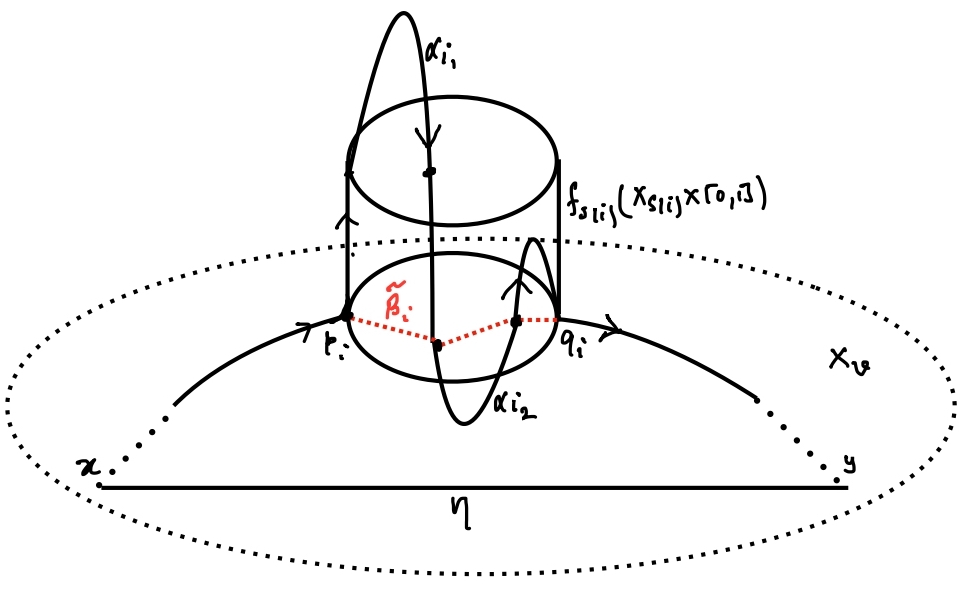}
\end{center}
\begin{center}
\underline{\small Figure - 1}
\end{center}
We claim that the truncated path $\Tilde{\beta}$ is quasi-geodesic in $X_v$. Let $c,d$ be two points in $\Tilde{\beta}$ and $\ell_{X_{v}}(\Tilde{\beta}|_{c}^{d})$ be the length of $\Tilde{\beta}$ between $c$ and $d$.\\
\begin{flushleft}
\underline{\textbf{Case 1:} $c\in \xi_k$ and $d\in \xi_m$ for some $k$ and $m$.}
\end{flushleft}

\begin{eqnarray*}
 \ell_{X_{v}}(\Tilde{\beta}|_{c}^{d})&=&\ell_{X_v}(\xi_k|_c^{p_k})+\sum\limits_{i=k+1}^{m-1}\ell_{X_{v}}(\xi_i)+\sum\limits_{i=k}^{m-1}\ell_{X_{v}}(\Tilde{\beta}_i)+ \ell_{X_v}(\xi_m|_{q_{m-1}}^d) \\
 &\leq& \ell_{X}(\xi_k|_c^{p_k})+\sum\limits_{i=k+1}^{m-1}\ell_{X}(\xi_i)+\sum\limits_{i=k}^{m-1}(K'\ell_X({\beta}_i)+K')+ \ell_{X}(\xi_m|_{q_{m-1}}^d)\\
 &\leq& K'\ell_X(\alpha|_{c}^{d})+(m-k)K'\\
 &\leq& K'\ell_X(\alpha|_{c}^{d})+K'\ell_X(\alpha|_{c}^{d})\\
 &\leq& 2NK'd_X(c,d) \leq 2NK'd_{X_v}(c,d)
\end{eqnarray*}

\begin{flushleft}
\underline{\textbf{Case 2:} $c\in\Tilde{\beta}_k$ and $d\in\Tilde{\beta}_m$ for some $k$ and $m$.}
\end{flushleft}
There exists $c'\in\beta_k$ and $d'\in\beta_m$ such that 
$d_X(c,c')\leq M'+K'$ and $d_X(d,d')\leq M'+K'$.

\begin{eqnarray*}
 \ell_{X_{v}}(\Tilde{\beta}|_{c}^{d})&=&\ell_{X_{v}}(\Tilde{\beta}_k|_{c}^{q_k})+ \sum\limits_{i=k+1}^{m}\ell_{X_{v}}(\xi_i)+\sum\limits_{i=k+1}^{m-1}\ell_{X_{v}}(\Tilde{\beta}_i)+\ell_{X_{v}}(\Tilde{\beta}_m|_{p_m}^{d})\\
 &\leq & K' \ell_X({\beta}_k|_{c'}^{q_k})+K'+\sum\limits_{i=k+1}^{m}\ell_X(\xi_i)+\sum\limits_{i=k+1}^{m-1}(K'\ell_X({\beta}_i)+K')\\&&+K'\ell_X({\beta}_m|_{p_m}^{d'})+K'\\
 &\leq& K'(\ell_X({\beta}_k|_{c'}^{q_k})+\sum\limits_{i=k+1}^{m}\ell_X(\xi_i)+\sum\limits_{i=k+1}^{m-1}\ell_X({\beta}_i)+\ell_X({\beta}_m|_{p_m}^{d'})) +(m-k)K'+2K'\\
 &\leq& K'\ell_X(\alpha |_{c'}^{d'}) +K'\ell_X(\alpha |_{c'}^{d'})+2K'\\
 &\leq& 2NK'd_X(c',d')+2K'\\
 &\leq& 2NK'(d_X(c,d)+2(M'+K'))+2K'\\
 &\leq&2NK'd_{X_v}(c,d)+4NK'(M'+K')+2K'.
\end{eqnarray*}
\begin{flushleft}
\underline{\textbf{Case 3:} $c\in\Tilde{\beta}_k$ and $d\in \xi_m$}.
\end{flushleft}
There exists $c'\in{\beta}_k$ such that $d_X(c,c')\leq M'+K'$.
\begin{eqnarray*}
 \ell_{X_{v}}(\Tilde{\beta}|_{c}^{d})&=&
 \ell_{X_{v}}(\Tilde{\beta}_i|_{c}^{q_k})+ \sum\limits_{i=k+1}^{m}\ell_{X_{v}}(\xi_i)+\sum\limits_{i=k+1}^{m-1}\ell_{X_{v}}(\Tilde{\beta}_i)+\ell_{X}(\xi_m|_{p_{m-1}}^{d})\\
 &\leq& K' \ell_X({\beta}_k|_{c'}^{q_k})+K'+ K'(\sum\limits_{i=k+1}^{m}\ell_X(\xi_i))+\sum\limits_{i=k+1}^{m-1}(K'\ell_X({\beta}_i)+K')+ K'\ell_{X}(\xi_m|_{p_{m-1}}^{d})\\
 &\leq& 2NK'd_X(c',d)+K'\\
 &\leq& 2NK'd_{X_v}(c,d)+2NK'(M'+K')+K'.
\end{eqnarray*}

Hence $\Tilde{\beta}$ is a $(2NK',4NK'(M'+K')+2K')$-quasi-geodesic in $X_v$, where $M'=M+1$.  Note that $K',M$ depends on $N,\sigma$.
As $\eta$ is $\rho$-contracting  there exists $\kappa=\kappa(N,\sigma,\rho)>0$ such that $\Tilde{\beta}\subset Nbhd(\eta;\kappa)$  (Contracting $\Rightarrow$ Morse, Theorem 1.3 of \cite{ACGH}). Also $\alpha\subset Nbhd(\Tilde\beta;M+1+K')$. Hence $\alpha\subset Nbhd(\eta;\kappa+M+1+K')$.
\end{proof}
By taking $N=1$ in Lemma \ref{Edge proj} and Theorem \ref{main}, we have the following corollary
\begin{cor}\label{main corollary}
Let $\sigma\geq 0$. There exists
$R=R(\sigma)\geq 1$ such that the following holds:
Let $X$ be a tree of proper geodesic spaces where all  edge spaces are
$\sigma$-contracting and uniformly $R$-separated.\\
(i)
Let $e$ be a directed edge with terminal vertex $v$. Let  $x,y$ be two points in the space $Y^+_e$. Let $\gamma:[0,l]\to X$ be a 
$X$-geodesic with $\gamma(0)=x,\gamma(l)=y$
such that $P(\gamma)\cap e=\{v\}$ and for all $t\in (0,l)$, $\gamma(t)\not\in Y^+_e$,
 Then the geodesic $\gamma$ does not intersect any other edge space other than $Y^+_e$.\\
(ii) Every strongly contracting geodesic  $\gamma$ in a vertex space is quasiconvex in $X$, where the quasiconvexity constant depends only on $\sigma$ and the contraction constant of $\gamma$.

\end{cor}
\begin{ex}
Let $G$ be a finitely generated group with a finitely generated subgroup $H$. Let $\Gamma_G$ denote the Cayley graph of $G$
with respect to some finite generating set.
Let $K$ be a coset of $H$ in $G$. We construct a one dimensional simplicial complex $\mathcal{H}(K)$  corresponding to $K$, it is called combinatorial horoball.
 
    \begin{itemize}
    \item $0$-skeleton of $\mathcal{H}(K)$, $\mathcal{H}{}(K)^{(0)}:= K^{(0)}\times (\{0,1,2,\cdots\}) $,
    \item $1$-skeleton of $\mathcal{H}(K)$,
    $\mathcal{H}{}(K)^{(1)}:=\big\{[(v,0),(w,0)] : v,w\in K^{(0)},
    [v,w]\in K^{(1)}\big\}\cup \big\{[(v,k),(w,k)] : v,w\in K^{(0)}, k>0, d_{K}(v,w)\leq 2^{k} \big\}\cup \big\{[(v,k),(v,k+1)] : v\in K^{(0)},
    \geq 0\big\}$.
    \end{itemize}
   Let  $\Gamma^{h}_{G}$ be the space obtained by gluing
    $\mathcal{H}(K)$ to $K$ in the Cayley graph $\Gamma_G$.
Suppose $G$ is hyperbolic relative to $H$. 
Then the augmented space $\Gamma^{h}_{G}$ is hyperbolic and $G$ acts properly discontinuously on $\Gamma^{h}_{G}$.

    We consider a subcomplex $\mathcal{H}_{m}^{c}(K)$ of $\mathcal{H}(K)$ whose $0$-skeleton is $K^{(0)}\times (\{m+1,m+2,\cdots\})$.
    Let $(\Gamma_{G})^{(m)}=\Gamma_{G}^{h}\backslash \mathcal{H}_{m}^{c}(K)$. 
    Then $G$ will act properly discontinuously  and co-compactly on $(\Gamma_{G})^{(m)}$ . Next we consider the subcomplex of $\Gamma_{G}^{h}$ 
    whose $0$-skeleton is $K^{(0)}\times (\{m\})$ and denote it by $\mathcal{H}_{m}(K)$. 
    It is known that peripheral subspaces are strongly contracting, for instance see Lemma 2.3 of \cite{sisto}. So,  $K$'s are strongly contracting in $\Gamma_G$.
    The Hausdorff distance between $\mathcal{H}_{m}(K)$ and $\mathcal{H}_{0}(K)=K$ is bounded by $m$. 
    Let $A$ be a  subset of diameter $r$ 
    in $\mathcal{H}_{0}(K)$ then the projection of $A$ into $\mathcal{H}_{m}(K)$ has diameter at most $\dfrac{r}{2^{m}}$ in $\mathcal{H}_{m}(K)$. Thus, $\mathcal{H}_{m}(K)$'s 
     are strongly contracting in $(\Gamma_{G})^{(m)}$ and if 
     the contraction constants for $\mathcal{H}_{0}(K),\mathcal{H}_{m}(K)$ are respectively $\kappa_0,\kappa_m$ then $\kappa_m\leq \kappa_0$. 
    Then we can choose large enough $m_{0}$ such that $\mathcal{H}_{m_{0}}(K)$ and 
    $\mathcal{H}_{m_{0}}(K')$ are $R(\kappa_0)$-separated for all distinct $K$ and $K^{'}$.  
    Let $T$ be the Bass-Serre tree of the graph of groups $G*_{H}G$. For every vertex of $T$ take a copy of $(\Gamma_{G})^{(m)}$ 
    and glue along appropriate translations of $\mathcal{H}_{m}(K)$ whenever there is an edge between two vertices. 
    So we get a tree of spaces, say $X$, where the conjugates of $G$
    in $G*_{H}G$ act properly discontinuously and co-compactly on the respective vertex spaces. 
    Let $\alpha$ be a strongly contracting geodesic in $\Gamma_G$. Since the Hausdorff distance between $(\Gamma_{G})^{(m)}$ and $\Gamma_G$ is at most $m$, $\alpha$ is strongly contracting in $(\Gamma_{G})^{(m)}$. As, $X$ satisfies all the assumptions of Corollary \ref{main corollary}, $\alpha$ is quasi-convex in $X$. Note that the group $G*_{H}G$ acts properly discontinuously and co-compactly on $X$, hence $G*_{H}G$ is quasi-isometric to $X$ (by \v{S}varc-Milnor Lemma). Thus, there exists $P\geq 1,\epsilon\geq 0$ such that 
    $\alpha$ is $(P,\epsilon)$-quasi-geodesic in $G*_{H}G$.

\end{ex}
\section{Applications}
Let $\delta\geq 0$. A geodesic triangle $\mathcal T$ is said to be $\delta$-\textit{slim} if each side of $\mathcal T$ is contained in the $\delta$-neighborhood of union of other two sides of $\mathcal T$. A geodesic metric space $X$ is said to be $\delta$-\textit{hyperbolic metric space} if all the triangles of $X$ are $\delta$-slim. A geodesic metric space is said to be \textit{hyperbolic} if it is $\delta$-hyperbolic for some $\delta\geq 0$.
Here we give a characterization of hyperbolic metric space, which we will used later.
\begin{prop}\label{bigon-char}
Let $Y$ be a geodesic metric space. $Y$ is hyperbolic if and only if there exists some $\delta>0$ such that all $(3,0)$-quasi geodesic bigons are $\delta$-slim (Here $\delta$-slimness of bigon means that the Hausdorff distance between the two sides is atmost  $\delta$).
\end{prop}
\begin{proof}
($\Rightarrow$) Suppose $Y$ is hyperbolic. Then from the stability of quasi-geodesics (Theorem 1.7, Pg.401, of \cite{bridson}), it follows that $(3,0)$-quasi-geodesics bigons are slim.\\
($\Leftarrow$) Suppose, there exists some $\delta>0$ such that all $(3,0)$-quasi geodesic bigons are $\delta$-slim. Let $ABC$ be an arbitrary triangle in $Y$ with sides $a$, $b$ and $c$ (sides $a$, $b$ and $c$ are opposite to the vertices $A$, $B$ and $C$ respectively). Let $D$ be a nearest point projection of vertex $A$ to the side $a$. Then $\alpha_1=\overline{AD}\cup \overline{DB}$ and $\alpha_2=\overline{AD}\cup \overline{DC}$ are $(3,0)$-quasi geodesics. So, we have two $(3,0)$-quasi geodesic bigons, namely $b\cup \alpha_1$ and $c\cup\alpha_2$. Now from our assumption, $b$ is in the $\delta$-neighbourhood of $\alpha_1$ and $\alpha_2$ is in the $\delta$-neighbourhood of $c$. Hence, $\delta$-neighbourhood of $a\cup c$ contains $\alpha_1\cup \alpha_2$ and so $2\delta$-neighbourhood of $a\cup c$ contains $b$. Similarly, $2\delta$-neighbourhood of $a\cup b$ contains $c$. \\
Again, taking the nearest point projection of the vertex $B$ to the side $b$, we can show that $a$ is contained in $2\delta$-neighbourhood of $b\cup c$. This proves that the triangle $ABC$ is $2\delta$-slim. Therefore, $Y$ is a hyperbolic metric space.
\end{proof}

The following theorem, due to Gromov, is a generalization of the fact that all triangles of a hyperbolic space are uniformly slim. Here we state a part of the version of the theorem given by Bestvina \& Feighn in \cite{best}.

\begin{thm}(See Section 3 of \cite{best} )\label{poly-resolution}(Resolution of Polygons of Hyperbolic Spaces) Let $\delta\geq 0$.
Let $Z$ be a $\delta$-hyperbolic metric space. Let $\Delta: D^2\rightarrow Z$ be a disk with boundary a $n$-sided $(K,\epsilon)$-quasi-geodesic polygon. Then there exist a function $B(\delta,n,K,\epsilon)$ (depending on $\delta$, $n$, $K$ and $\epsilon$ only) and a finite $\mathbb{R}$-tree $S$ with a map $r:D^2\rightarrow S$ such that:\\
(1) there are $n$ number of valence one vertices in $S$,\\
(2) $d_{Z}(\Delta(a),\Delta(b))\leq d_{S}(r(a),r(b))+B(\delta,n,K,\epsilon)$, for all $a,b\in S^1$,\\
(3) for all $s\in S$, $r^{-1}(s)$ is properly embedded finite tree  in $D^2$,\\
(4) if $e$ is an edge of $S$, then $r$ restricted to $r^{-1}(interior(e))$ is an $I$-bundle, where $I=[0,1]$.
\end{thm}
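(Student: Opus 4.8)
The plan is to reduce to the case of a genuine geodesic polygon and then to build $S$ by gluing together the comparison tripods of a triangulation. Fix a hyperbolicity constant $\delta$ for $Z$. First I would invoke stability of quasigeodesics (the Morse lemma): each $(K,\epsilon)$-quasigeodesic side of the boundary polygon lies within Hausdorff distance $H=H(K,\epsilon,\delta)$ of the geodesic joining its endpoints, so after replacing the sides by geodesics I obtain a geodesic $n$-gon $P$ with the same vertices $\Delta(p_1),\dots,\Delta(p_n)$, at the cost of an additive error that will be absorbed into $B$. It therefore suffices to resolve a geodesic $n$-gon, and essentially the entire $K,\epsilon$-dependence of $B$ enters at this step.

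Next I would triangulate $P$ by the $n-3$ diagonals issuing from one vertex, cutting $D^2$ into $n-2$ geodesic triangles. To each geodesic triangle I attach its Gromov comparison tripod $Y_i$: the canonical map $f_i$ that is isometric on each of the three sides and collapses the triple of internal (Gromov-product) points. Because $Z$ is $\delta$-hyperbolic, the preimage of the centre has diameter at most $4\delta$ and, for any two boundary points $u,v$ of the triangle, $d_Z(u,v)\le d_{Y_i}(f_i(u),f_i(v))+4\delta$. The maps $f_i$ are isometries on the shared diagonals, so the tripods can be glued along these diagonals via $f_{i+1}\circ f_i^{-1}$; since a union of trees identified along an isometrically embedded arc is again a tree, this produces the finite $\mathbb{R}$-tree $S$ together with a well-defined map $r$ on the boundary circle $S^1$.

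To obtain $r$ on all of $D^2$ with the asserted fibre structure I would extend $f_i$ across each filled triangle as a resolution: foliate the triangle by arcs transverse to the sides that are carried to single points of $Y_i$, so that over the interior of a leg the preimage is a band $I\times(\text{interval})$, giving the $I$-bundle of (4), and over the centre the preimage is the \emph{inner} tree of the triangle, a properly embedded finite tree, giving (3). Gluing these foliations along the diagonals — where the two transverse families match because $f_i$ and $f_{i+1}$ agree there — yields the global $r:D^2\to S$. The valence-one vertices of $S$ are exactly the images of the polygon vertices $\Delta(p_i)$, since the endpoints of the legs survive the gluing as leaves, which accounts for (1) with $k\le n$.

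Finally, for (2) I would argue that on the boundary $r$ restricts to an isometry along each side, while the configuration of the $n$ images $r(\Delta(p_i))$ in the tree $S$ is precisely the Gromov tree-approximation of the $n$ points $\Delta(p_i)$ in $Z$; hence $d_Z(\Delta(a),\Delta(b))\le d_S(r(a),r(b))+O(n\delta)$ for all $a,b\in S^1$, and together with the straightening error this gives $B=B(n,K,\epsilon)$. The step I expect to be the main obstacle is not the tree $S$ itself but the simultaneous, consistent extension of the comparison maps over the two-dimensional triangles so that point-preimages are genuinely trees and edge-interior preimages are honest $I$-bundles matching across every diagonal; keeping this combinatorial fibre structure coherent while the additive $4\delta$ errors accumulate in a controlled way over the $n-2$ triangles is where the real work lies.
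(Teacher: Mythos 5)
The paper does not prove this statement at all: Theorem \ref{poly-resolution} is imported as a known result (``due to Gromov,'' in the version of Bestvina--Feighn \cite{best}) and is used as a black box in the application section. So there is no in-paper proof to compare yours against; the relevant comparison is with the standard proof in the literature, and your sketch is essentially that proof. Straightening the quasigeodesic sides by the Morse lemma, fan-triangulating the resulting geodesic $n$-gon, gluing the $n-2$ comparison tripods along the diagonals (a union of $\mathbb{R}$-trees glued isometrically along arcs is again an $\mathbb{R}$-tree), and foliating each triangle so that point-preimages are arcs or triods is exactly how such resolutions are built; the metric estimate with error linear in $n$ is the crude form of Gromov's approximation-tree theorem (the sharp, logarithmic-in-$n$ version is proved by induction on tripods, but linear error is all that a bound of the form $B(n,K,\epsilon)$ requires).

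Two points you should make explicit in a full write-up, neither of which is a fatal gap. First, for inequality (2) you need the errors to add rather than compound: each tripod $Y_i$ embeds isometrically in the glued tree $S$, and the $S$-geodesic from $r(a)$ to $r(b)$ crosses the images of the intermediate diagonals at points which are images of actual points $w_i$ on those diagonals (the diagonals map isometrically onto their images); decomposing the geodesic at the $w_i$ and applying the single-triangle estimate $d_Z\le d_{Y_i}+4\delta$ to each piece gives $d_Z(\Delta(a),\Delta(b))\le d_S(r(a),r(b))+4\delta(n-1)$, to which the straightening error $2H(K,\epsilon,\delta)$ is added. Second, $r$ must be defined on $D^2$ with $S^1$ carrying the \emph{original} quasigeodesic polygon, not the straightened one: you need a continuous monotone reparametrization of each quasigeodesic side onto the corresponding geodesic side, together with a collar extension of $r$ over the annulus between them, so that (2) holds for $\Delta$ itself and the fibres over the collar remain properly embedded arcs. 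This is precisely the bookkeeping your final paragraph anticipates, and it goes through.
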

 Here note that, if two points $p=\Delta(a)$ and $q=\Delta(b)$ of $\Delta$ are identified in $S$, that is $r(a)=r(b)$, then the distance between them in $Z$ is at most $B(\delta,n,K,\epsilon)$. A fiber $r^{-1}(s)$ of  a  resolution is called a \textit{singular  fiber}  if $r^{-1}(s)$   is  not  isomorphic  to $I$.
\begin{thm} \label{hyp thm}Let $\delta,\sigma\geq 0$. Then there exists $\tilde R=\tilde R(\delta,\sigma)$ such that the following holds :

Let $\tilde{P}:\tilde{X}\rightarrow \tilde{T}$ be a tree of proper geodesic metric spaces. Suppose all the vertex spaces are $\delta$- hyperbolic, edge spaces are uniformly $\sigma$-contracting and $\tilde{R}$-separated then $\tilde{X}$ is hyperbolic.
\end{thm}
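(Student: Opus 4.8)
The plan is to verify the bigon criterion of Theorem \ref{bigon-char}: it suffices to produce a uniform $\delta$ so that every $(3,0)$-quasigeodesic bigon in $\tilde{X}$ is $\delta$-slim. Fix such a bigon with sides $\alpha,\beta$ from $x$ to $y$. The first step is a structural reduction using the tree $\tilde{T}$. Both $\tilde{P}(\alpha)$ and $\tilde{P}(\beta)$ are paths joining $\tilde{P}(x)$ to $\tilde{P}(y)$, so each of their images contains the tree geodesic $[\tilde{P}(x),\tilde{P}(y)]$; consequently both sides must cross, in order, every edge space $X_{e_1},\dots,X_{e_n}$ sitting over the edges of that tree geodesic. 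Because the edge spaces are $\tilde{R}$-separated and $\alpha,\beta$ are quasigeodesics, up to a uniformly bounded error both sides follow the same chain of vertex spaces $X_{v_0},\dots,X_{v_n}$ separated by these backbone edge spaces, any excursion into an off-backbone branch being confined to a bounded neighbourhood of the edge space through which it departs (an argument in the spirit of Lemma \ref{Edge proj} and Remark \ref{remark}).

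The engine of the proof is the assertion that each backbone edge space $X_{e_j}$ is strongly contracting in the whole of $\tilde{X}$, with contraction constant depending only on $\sigma$. Inside a single vertex space this is the hypothesis; to promote it to $\tilde{X}$ I would run the argument of Lemma \ref{Edge proj} and Theorem \ref{main} with $Y_{e_j}$ playing the role that the contracting geodesic $\eta$ plays there: a ball disjoint from $X_{e_j}$ projects through a vertex space in which $\sigma$-contraction applies, while the $\tilde{R}$-separation together with Lemma \ref{Proj} controls the contribution of any further edge spaces that an intervening geodesic might meet. This is exactly the place where the magnitude of $\tilde{R}=\tilde{R}(\sigma)$ is dictated, since the relevant constants $\kappa_\sigma,\kappa_\sigma',C_1,D_1$ of Theorem \ref{GProj}, Lemma \ref{closeness} and Lemma \ref{Proj} depend only on $\sigma$.

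With $X_{e_j}$ strongly contracting in $\tilde{X}$, the next step is to pin down where the two sides cross it. Applying the Bounded Quasi-geodesic Image Theorem \ref{BQI} to the initial sub-quasigeodesics of $\alpha$ and of $\beta$ (both begin at $x$ and remain outside a fixed neighbourhood of $X_{e_j}$ until they first reach it) shows that the point where each side first enters that neighbourhood lies uniformly close to $\pi_{X_{e_j}}(x)$; symmetrically, the last exit points lie uniformly close to $\pi_{X_{e_j}}(y)$. Thus $\alpha$ and $\beta$ enter and leave each backbone edge space at coarsely the same pair of points. This reduces the global comparison to a local one inside the hyperbolic vertex spaces: the subpaths $\alpha_j=\alpha\cap X_{v_j}$ and $\beta_j=\beta\cap X_{v_j}$ are geodesic segments of $X_{v_j}$ (by Lemma \ref{Edge proj}, as in the proof of Theorem \ref{main}) with endpoints a bounded distance apart on $Y_{e_j}$ and $Y_{e_{j+1}}$, so by uniform hyperbolicity of $X_{v_j}$ — or, keeping the quasigeodesic framework, by Theorem \ref{poly-resolution} applied to the quadrilateral they span together with Remark \ref{remark} — they $\delta'$-fellow-travel in $X_{v_j}$. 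Concatenating over $j$ yields a uniform slimness bound, and Theorem \ref{bigon-char} then gives that $\tilde{X}$ is hyperbolic, with $\tilde{R}$ depending only on $\sigma$ and the final hyperbolicity constant depending additionally on the common hyperbolicity constant of the vertex spaces.

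\textbf{Main obstacle.} The genuinely delicate point is the crossing-matching step: when $\pi_{X_{e_j}}(x)$ and $\pi_{X_{e_j}}(y)$ happen to be far apart along $X_{e_j}$, the two sides travel a long way inside a neighbourhood of the edge space and could a priori meet it at different places. Controlling this forces one to use strong contraction of $X_{e_j}$ in the \emph{ambient} $\tilde{X}$, not merely in a vertex space, and to choose $\tilde{R}(\sigma)$ large enough that the separation dominates every projection-diameter constant above. Establishing strong contraction of the edge spaces in $\tilde{X}$ is therefore the real heart of the argument; once that is in hand, the tree decomposition and the hyperbolicity of the vertex spaces assemble routinely.
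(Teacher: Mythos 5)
Your frame (the bigon criterion of Theorem \ref{bigon-char} plus the decomposition along the chain of vertex and edge spaces over the tree geodesic) coincides with the paper's, but from there you take a genuinely different route, and the route has a real gap exactly at the point you flag as its heart. The paper never proves, and never uses, strong contraction of edge spaces in the ambient space $\tilde{X}$. Instead it truncates the two sides of the bigon into $(K,\epsilon)$-quasigeodesics of the successive vertex spaces (Remark \ref{remark}), joins their --- possibly very distant --- entry points and exit points on each edge space by vertex-space geodesics $\beta_i^1,\beta_i^2$, and applies the Bestvina--Feighn resolution of polygons (Theorem \ref{poly-resolution}) to the resulting quasigeodesic quadrilateral inside each hyperbolic vertex space; the choice $\tilde{R}>B(4,K,\epsilon)$ rules out the resolution that would identify $\beta_i^1$ with $\beta_i^2$, and the surviving resolution forces the two sides to fellow-travel. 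In other words the paper's mechanism is built so that it never has to solve your ``crossing-matching'' problem. Your substitute for it --- the claim that each edge space is $\sigma'$-contracting in all of $\tilde{X}$ with $\sigma'$ depending only on $\sigma$ --- does not follow from ``running the argument of Lemma \ref{Edge proj} and Theorem \ref{main} with $Y_{e_j}$ in place of $\eta$'': those arguments produce \emph{quasiconvexity} of $Y_{e_j}$ in $\tilde{X}$, which is strictly weaker than strong contraction. Contraction is a statement about nearest-point projections of arbitrary balls disjoint from $Y_{e_j}$, and nothing cited controls the ambient projection on points beyond intervening edge spaces, nor compares ambient with vertex-space projections (these can differ a priori, since vertex spaces carry the induced path metric and ambient geodesics may shortcut through neighbouring vertex spaces). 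I believe the claim is true for suitable $\tilde R(\sigma)$ --- Lemma \ref{Proj} bounds the projection of everything beyond an intervening edge space, and a Lemma \ref{Edge proj}-type argument confines ambient geodesics ending on $Y_{e_j}$ --- but establishing it is a substantive lemma of its own, comparable in weight to Theorem \ref{main}, and your proposal does not supply it.

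There is also a second, smaller gap in the reduction step. Theorem \ref{BQI} matches the points where the two sides first enter (respectively last leave) a fixed \emph{neighbourhood} of $X_{e_j}$, placing them near $\pi_{X_{e_j}}(x)$ (respectively $\pi_{X_{e_j}}(y)$); it does not match the actual crossing points of $X_{e_j}$, which can sit anywhere between $\pi_{X_{e_j}}(x)$ and $\pi_{X_{e_j}}(y)$ (already for geodesics crossing a line in the hyperbolic plane). Consequently the subpaths $\alpha_j=\alpha\cap X_{v_j}$ and $\beta_j=\beta\cap X_{v_j}$ do \emph{not} have ``endpoints a bounded distance apart'' as written, and the local fellow-travelling argument cannot be applied to them directly. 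One can repair this: compare instead the portions between the matched neighbourhood exit of $X_{e_j}$ and the matched neighbourhood entry of $X_{e_{j+1}}$, and handle the leftover portions running alongside each edge space separately --- for instance by observing that, under the hypotheses, edge spaces are quasiconvex in hyperbolic vertex spaces and hence themselves uniformly hyperbolic, so those portions also fellow-travel. But this observation appears nowhere in your proposal, and without it (or the paper's polygon-resolution device, which sidesteps both issues at once) the concatenation step does not go through.
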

\begin{proof}

For a tree of hyperbolic metric spaces $P:X\to T$,
the vertex spaces are hyperbolic metric spaces and hence geodesics in a vertex space are uniformly strongly contracting (See Section 1.4 of \cite{ACGH}). Thus, by Theorem \ref{main}, if we take 
edge spaces to be $\sigma$-contracting and at least $R=R(3,\sigma)$-separated then
every $(3,0)$-quasi-geodesic of $X$ having end points in an edge space does not intersect other edge spaces. By Lemma \ref{closeness}, there exists $K_1\geq 1$ depending only on $\sigma$ such that the edge spaces are $(K_1,K_1)$-quasi-isometrically embedded in the respective vertex spaces. Also, there exists $K_2\geq 1$ and $\epsilon_2\geq 0$ depending only on $\sigma$ and not on the separation constant $R$ such that the portion of a $(3,0)$-quasi-geodesic of $X$ with end points in a vertex space of $P:X\to T$ gives rise to a truncated $(K_2,\epsilon_2)$-quasi-geodesic in that vertex space.
\par
Let $K=\max\{K_1,K_2,\epsilon_2,3\}$ and $\tilde{R}=\max\{R(3,\sigma), B(\delta,4,K,K)+1\}$, where $B(\delta,n,K,\epsilon)$ is the constant from Theorem \ref{poly-resolution}. Then $K$ depends only on $\sigma$ and  $\tilde R$ depends only on $\delta,\sigma$.
We will apply the characterization given in Proposition \ref{bigon-char}, to prove that $\tilde{X}$ is hyperbolic.
Let $x$, $y$ be any two points of $\tilde{X}$ and $\gamma_1$, $\gamma_2$ be two $(3,0)$-quasi geodesics between $x$ and $y$ in $\tilde{X}$. So, $\gamma_1\cup\gamma_2$ gives a $(3,0)$-quasi geodesic bigon in $\tilde{X}$. We will show that $\gamma_1$ and $\gamma_2$ lie in a bounded neighborhood of each other where the bound depends only on $\delta$ and $\sigma$. We  $\gamma_1,\gamma_2$ to be parameterized by arc-length.
\par
For an edge $e$, the map $f_e:\tX_e\times (0,1)\to \tX$ is an isometry onto its image. For the  convenience of notation, we denote the space $f_e(\tX_e\times (0,1))$ by $\tX_e\times (0,1)$.
Let $e_1$ be the edge with terminal vertex $v_1$ such that $x\in \tX_{e_1}\times (0,1)\sqcup \tX_{v_1}$ and
$e_{n+1}$ be the edge with initial vertex $v_n$ such that $y\in \tX_{v_n}\sqcup\tX_{e_{n+1}}\times (0,1)$. Let $e_1,v_1,e_{12},v_2,e_{23},v_3,...,v_n,e_{n+1}$ be the all vertices and edges in order lying on the geodesic between $\tilde{P}(x)$ and $\tilde{P}(y)$ in $\tilde{T}$. The paths $\tilde{P}(\gamma_1)$ and $\tilde{P}(\gamma_1)$ in $\tilde{T}$ will contain  $e_1,v_1,e_{12},v_2,e_{23},v_3,...,v_n,e_{n+1}$ of $T$, that means $\gamma_1$ and $\gamma_2$ will intersect the spaces $\tX_{e_1}\times (0,1) \sqcup\tX_{v_{1}},\tX_{e_{12}}\times (0,1),...,\tX_{v_n}\sqcup\tX_{e_{n+1}}\times (0,1)$.\par
For $i=1,2,...n$, let $p^{1}_{i}$ (respectively $p^{2}_{i}$) be the first entry point and $q^{1}_{i}$ (respectively $q^{2}_{i}$)  be the last exit point of $\gamma_1$ (respectively $\gamma_2$)  to $\tX_{v_i}$. If $x\in \tX_{v_1}$ then we take $p_1^1=p_1^2=x$ and if $y\in \tX_{v_n}$ then we take $q^1_1=q^2_1=y$.

By Theorem \ref{main}, the subsegments $\gamma_{1}\big\vert_{p^{1}_{i}}^{q^{1}_{i}}$ and  $\gamma_{2}\big\vert_{p^{2}_{i}}^{q^{2}_{i}}$ of $\gamma_1$ and $\gamma_2$ respectively lie in a uniformly bounded neighborhood of $\tX_{v_i}$. The truncated paths $\gamma^{1}_{i}$ and $\gamma^{2}_{i}$ in $\tX_{v_i}$ obtained from $\gamma_1$ and $\gamma_2$ respectively  are $(K_2,\epsilon_2)$ quasi geodesics and the Hausdorff distance between $\gamma^{1}_{i}$ (respectively $\gamma^{2}_{i}$) and $\gamma_{1}\big\vert_{p^{1}_{i}}^{q^{1}_{i}}$ (respectively $\gamma_{2}\big\vert_{p^{2}_{i}}^{q^{2}_{i}}$) is uniformly bounded by a number, say $L$, where $L$ depends on $\sigma$ only. We join $p^{1}_{i}$ and $p^{2}_{i}$ by a geodesic in $\tX_{e_{i-1,i}}\times\{1\}$ and call it $\beta_{i}^{1}$. Similarly we join $q^{1}_{i}$ and $q^{2}_{i}$ by a geodesic in $\tX_{e_{i,i+1}}\times\{0\}$ and call it $\beta_{i}^{2}$.
The edge spaces are $(K_1,K_1)$-quasi-isometrically embedded in the vertex spaces. Hence $\beta_{i}^{1}$ and $\beta_{i}^{2}$ are $(K_1,K_1)$-quasi-geodesics in $\tX_{v_i}$ for all $i$. We denote the subsegments of $\gamma_1$ (respectively $\gamma_{2}$) between ${q_{i-1}^{1}}$ and ${p_{i}^{1}}$ (respectively ${q_{i-1}^{2}}$ and ${p_{i}^{2}}$) by $\gamma^{1}_{i-1,i}$ (respectively $\gamma^{2}_{i-1,i}$). Note that $\gamma^{j}_{i-1,i}$ lies in $\tX_{e_{i-1,i}}\times [0,1]$, where $j=1,2$.


As $K=\max\{K_1,K_2,\epsilon_2,3\}$, for each $i=1,2,...n$, we have a disk $\Delta_i:D^2\rightarrow \tX_{v_i}$ with boundary a $({K},{K})$-quasi-geodesics $4$-gon (quadrilateral) in $\tX_{v_i}$, namely $\gamma^{1}_{i}\cup\beta_{i}^{2}\cup\gamma^{2}_{i}\cup\beta_{i}^{1}$. If $x\in \tX_{v_1}$ then the side $\beta^1_1$ is the  single point $x$ and if $y\in\tX_{v_n}$ then the side $\beta^2_n$ is the single point $y$.
Also, for $i=2,...,n$, we have disks $\Delta_{[i-1,i]}:D^2\rightarrow \tX_{e_{i-1,i}}\times [0,1]$ with boundary a $(K,K)$ quasi geodesics $4$-gon (quadrilateral) in $\tX_{e_{i-1,i}}\times [0,1]$, namely $\gamma^{1}_{i-1,i}\cup\beta_{i}^{1}\cup\gamma^{2}_{i-1,i}\cup\beta_{i-1}^{2}$. If $x\in \tX_{e_1}\times (0,1)$ (respectively $y\in \tX_{e_{n+1}}\times (0,1)$) we have a disk $\Delta_{e_1}:D^2\to \tX_{e_1}\times (0,1)$ 
with boundary a $(K,K)$-quasi-geodesic triangle $\gamma|_x^{p_1^1}\cup\beta^1_1\cup \gamma|_x^{p_1^2}$ and if $y\in \tX_{e_{n+1}}\times (0,1)$ we have a disk $\Delta_{e_{n+1}}:D^2\to \tX_{e_{n+1}}\times (0,1)$ with boundary a $(K,K)$-quasi-geodesic triangle $\gamma|_y^{q_n^1}\cup\beta^2_n\cup \gamma|_y^{q_n^2}$.\par

The initial and terminal disks containing $x,y$ respectively give rise to  quasi-geodesic triangles where we get only  single resolution for each disks by Theorem \ref{poly-resolution}. Apart from the initial and terminal disks, by Theorem \ref{poly-resolution},  we can have two types of resolutions $r_i:D^2\rightarrow S_i$, for each quadrilateral $\Delta_i$ (See Figure-2). 
In the first type, we will have two points $p\in \beta_{i}^{1}$, $q\in \beta_{i}^{2}$ which are identified in the $S_i$ and so $d_{\tX_{v_i}}(p,q)\leq B(\delta,4,K,K)$. And this contradicts the fact that edge spaces are $\tilde{R}$-separated (Note that $\tilde{R}> B(\delta,4,K,K)$). Therefore, we will only have the second type for $\Delta_i$'s. 
\begin{center}
\includegraphics[width=15cm,height=4cm]{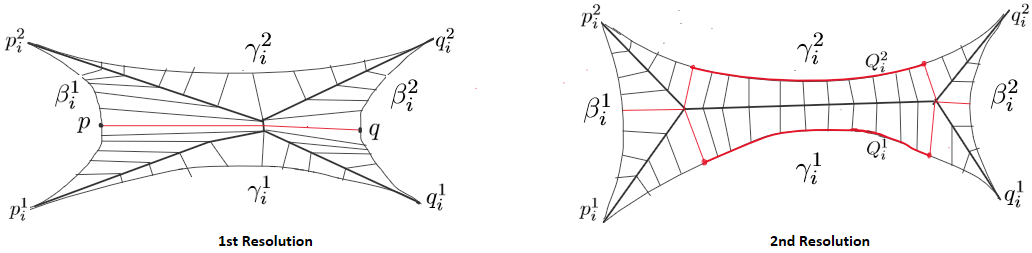}
\end{center}
\begin{center}
    \underline{\small Figure - 2}
\end{center}

In the second type, there are maximal segments, $Q^{1}_{i}$ and $Q^{2}_{i}$ of $\gamma^{1}_{i}$ and $\gamma^{2}_{i}$ respectively, which are identified pointwise in $S_i$. And so Hausdorff distance between those segments is at most $B(\delta,4,K,K)$. Let $u_{i}^{1}$ (respectively $u_{i}^{2}$) and $v_{i}^{1}$ (respectively $v_{i}^{2}$) be the end points of $Q^{1}_{i}$ (respectively $Q^{2}_{i}$). Note that $u_{i}^{1}$ and $u_{i}^{2}$ (respectively $v_{i}^{1}$ and $v_{i}^{2}$) are identified in $S_i$. Also there exist $w^{1}_{i}\in \beta_{i}^{1}$ such that $w^{1}_{i}$ is identified with $u_{i}^{1}$ and $u_{i}^{2}$ in $S_i$.

Similarly, we will have resolutions for $\Delta_{[i-1,i]}$'s. Here two types of resolution are possible. Now consider $\Delta_{[i-1,i]}$ and $\Delta_{i}$. Note that they have a common side, namely $\beta_{i}^{1}$. Depending on the type of resolution for $\Delta_{[i-1,i]}$, we will have to consider two cases.


\begin{center}
    \underline{\textbf{Case I}: $\Delta_{[i-1,i]}$ has type-2 resolution.}
\end{center}

Take any point $u\in \gamma_{i}^{1}\vert^{u^{1}_{i}}_{p^{1}_{i}}$, then there exists $w_{u}\in \beta_{i}^{1}$ such that $d_{\tX}(u,w_{u})\leq B(\delta,4,K,K)$. From the resolution of $\Delta_{[i-1,i]}$ there will exists $w_{u}'$ in either $\gamma_{i-1,i}^{1}$ or $\gamma_{i-1,i}^{2}$ such that $d_{\tX}(w_{u},w_{u}')\leq B(\delta,4,K,K)$. Hence, $d_{\tX}(u,w_{u}')\leq 2B(\delta,4,K,K)$.\\
Let $S_{[i-1,i]}$ be the finite tree and $r_{[i-1,i]}:D^2\to S_{[i-1,i]}$ be the map as in Theorem \ref{poly-resolution}
corresponding to the type-2 resolution of $\Delta_{[i-1,i]}$. There exists three points $s_{i-1,1}^1\in\gamma_{i-1,1}^1$,  $s_{i-1,1}^2\in\gamma_{i-1,1}^2$ and  $s_i^1\in\beta_i^1$ such that the set $\{s_{i-1,1}^1,s_{i-1,1}^2,s_i^1\}$ is mapped to a single vertex of $S_{[i-1,i]}$ under the map $r_{[i-1,i]}$ and diameter of the set $\{s_{i-1,1}^1,s_{i-1,1}^2,s_i^1\}$ is at most $B(\delta,4,K,K)$. Note that the points $u_i^1\in\gamma^1_i$, $u_i^2\in\gamma^2_i$ and $w_{u^1_i}\in\beta^1_i$ are mapped to a single vertex of the finite tree $S_i$ corresponding to the resolution of $\Delta_i$. Now two sub-cases arise.
\flushleft \underline{\textbf{Sub-case 1:}} Suppose $w_{u^1_i}$ comes before $s_i^1$ in $\beta_i^1$ (See Figure 3.1):\\
In this case $d_{\tX}(u_i^1,w_{u_i^1}')\leq 2B(\delta,4,K,K)$. The Hausdorff distance between $\gamma^{1}_{i}$ and $\gamma_1\vert_{p^{1}_{i}}^{q^{1}_{i}}$ is uniformly bounded by $L$. Hence, for each $u\in \gamma^{1}_{i}$, there exists  $\Bar{u}\in \gamma_1$ such that $d_{\tX}(\Bar{u},u)\leq L$.
Thus, there exists $\Bar{u}_i^1\in\gamma^1_i$ such that $d_{\tX}(u_i^1,\Bar{u}_i^1)\leq L$. By triangle inequality, $d_{\tX}(\Bar{u}_i^1,w_{u_i^1}')\leq 2B(\delta,4,K,K)+L$.
The path $\gamma_1$ is a $(3,0)$-quasi-geodesic. So, the length of the subsegment of $\gamma_1$ between $\Bar{u}_i^1$ and $w_{u_i^1}'$ is at most
$3\times (2B(\delta,4,K,K)+L)=6B(\delta,4,K,K)+3L$. The quasi-geodesic $\gamma_1$ passes through $p_i^1$, hence 
$d_{\tX}(u_i^1,p_i^1)\leq d_{\tX}(u_i^1,\Bar{u}_i^1)+d_{\tX}(\Bar{u}_i^1,p_i^1)\leq L+ 6B(\delta,4,K,K)+3L=6B(\delta,4,K,K)+4L.$ As vertex spaces are properly embedded in $\tX$, there exists a positive number $L'$ depending on $K$ such that $d_{\tX_{v_i}}(u_i^1,p_i^1)\leq L'$. The truncated path $\gamma_{i}^{1}\vert^{u^{1}_{i}}_{p^{1}_{i}}$ between ${u^1_i}$ and ${p^1_i}$ is a $(K,K)$-quasi-geodesic in $\tX_{v_i}$. Hence $\ell(\gamma_i^1|^{u^1_i}_{p^1_i})\leq KL'+K$ and  we have  $d_{\tX_{v_i}}(u,u^1_i)\leq KL'+K$, as $u\in \gamma_{i}^{1}\vert^{u^{1}_{i}}_{p^{1}_{i}}$. So, $d_{\tX}(u,u_i^2)\leq KL'+K+2B(\delta,4,K,K)$ and $d_{\tX}(u,\gamma_2)\leq KL'+K+2B(\delta,4,K,K)+L$.
\begin{center}
    \includegraphics[width=15cm,height=5.5cm]{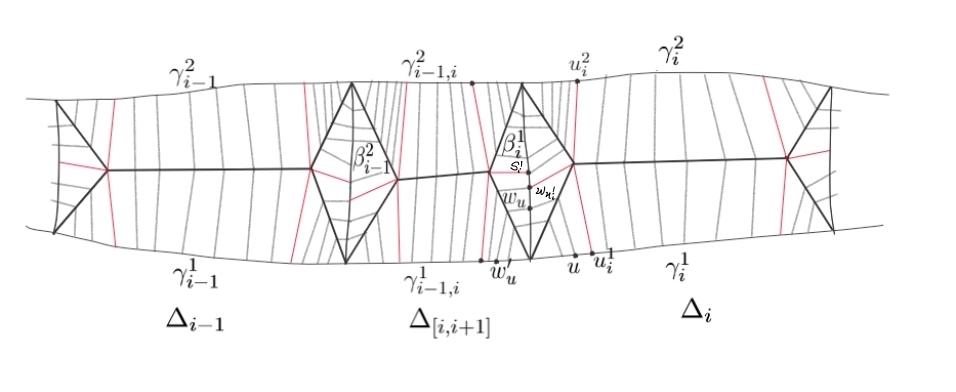}
    \underline{\small    Figure 3.1}
\end{center}

\flushleft \underline{\textbf{Sub-case 2:}}
Suppose $w_{u^1_i}$ comes after $s_i^1$ in $\beta_i^1$ (See Figure 3.2).\\
There exists $x_{i}\in \gamma_{i}^{1}\vert^{u^{1}_{i}}_{p^{1}_{i}}$ such that $s_i^1=w_{x_i}$. We take $x_{i}\in \gamma_{i}^{1}\vert^{u^{1}_{i}}_{p^{1}_{i}}$
to be the last point such that $s_i^1=w_{x_i}$. Now $w_{x_{i}}'\in\gamma_{i-1,i}^{1}$ and $d_{\tX}(x_i,w_{x_{i}}')\leq 2B(\delta,4,K,K)$.
The Hausdorff distance between $\gamma^{1}_{i}$ and $\gamma_1\vert_{p^{1}_{i}}^{q^{1}_{i}}$ is uniformly bounded by $L$. Hence, for each $u\in \gamma^{1}_{i}$, there exists  $\Bar{u}\in \gamma_1$ such that $d_{\tX}(\Bar{u},u)\leq L$. Therefore, from the fact that $\gamma_1$ is a $(3,0)$-quasi geodesic in $\tX$, we have
\begin{center}
$\ell_{\tX}(\gamma_1\vert^{\Bar{x}_{i}}_{p^{1}_{i}})\leq \ell_{\tX}(\gamma_1\vert^{\Bar{x}_{i}}_{w_{x_i}'})\leq 3.d_{\tX}(\Bar{x}_{i},w_{x_i}')\leq 3\Big(d_{\tX}(\Bar{x}_{i},x_i)+d_{\tX}(x_{i},w_{x_i}')\Big)$
\end{center}
\flushright$\leq 3L+6B(\delta,4,K,K)$ $\hspace{2.58cm}\cdots (\ast)$

\flushleft Then, $d_{\tX}(x_i,p^{1}_{i})\leq d_{\tX}(x_i,\Bar{x}_i)+d_{\tX}(\Bar{x}_i,p^{1}_{i})\leq L+\ell_{\tX}(\gamma_1\vert^{\Bar{x}_{i}}_{p^{1}_{i}})$ \flushright $\leq L+6B(\delta,4,K,K)+3L$ 
\hspace{2.4cm}\big( from $(\ast)$ \big)\\
\flushleft Since, vertex spaces are uniformly properly embedded in $X$, hence there exists $L'>0$ such that \hspace{.3cm}$d_{\tX_{v_i}}(x_i,p^{1}_{i})\leq L'$. \hspace{.3cm} Again, since $\gamma_{i}^{1}$ is a $(K,K)$-quasi geodesic in $\tX_{v_i}$, hence $\ell_{\tX}(\gamma_{i}^{1}\vert^{x_{i}}_{p^{1}_{i}})=\ell_{\tX_{v_{i}}}(\gamma_{i}^{1}\vert^{x_{i}}_{p^{1}_{i}})\leq KL'+K$. 


If $u\in \gamma_{i}^{1}\vert_{x_{i}}^{u^{1}_{i}}$, then $w_{u}'\in\gamma_{i-1,i}^{2}\subset \gamma_2$ and $d_{\tX}(u,w_{u}')\leq 2B(\delta,4,K,K)$.\\

Let $u\in \gamma_{i}^{1}\vert_{p^{1}_{i}}^{x_{i}}$. Now, $d_{\tX}(p_i^1,w_{x_i}')\leq\ell_{\tX}(\gamma_1\vert^{p_i^1}_{w_{x_i}'})\leq \ell_{\tX}(\gamma_1\vert^{\Bar{x}_{i}}_{w_{x_i}'})\leq 6B(\delta,4,K,K)+3L$. (from ($\ast$)). There exists $w_{x_i}''\in\gamma^2_{i-1,i}\subset\gamma_2$ such that $d_{\tX}(w_{x_i}',w_{x_i}'')\leq B(\delta,4,K,K)$. Therefore, 
\begin{eqnarray*}
 d_{\tX}(u,\gamma_2)\leq d_{\tX}(u,w_{x_i}'')
 &\leq& d_{\tX}(u,p_i^1)+d_{\tX}(p_i^1,w_{x_i}')+d_{\tX}(w_{x_i}',w_{x_i}'')\\
 &\leq&\ell_{\tX_{v_{i}}}(\gamma_{i}^{1}\vert^{x_{i}}_{p^{1}_{i}})+6B(\delta,4,K,K)+3L+B(\delta,4,K,K)\\
 &\leq& KL'+K+7B(\delta,4,K,K)+3L.
\end{eqnarray*}

\begin{center}
    \includegraphics[width=14cm,height=4.8cm]{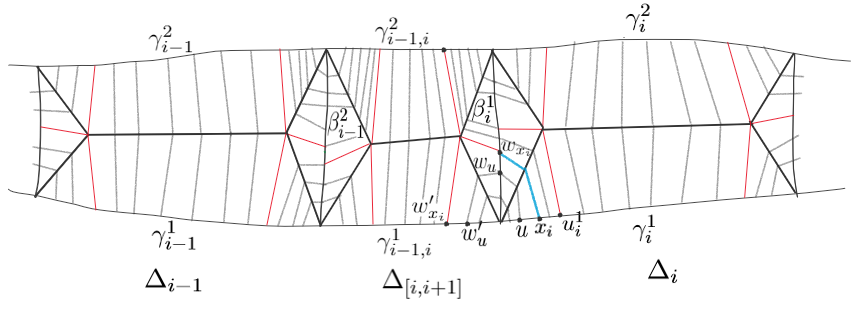}
\end{center}
\begin{center}
\underline{\small    Figure - 3.2}
\end{center}


\begin{center}
    \underline{\textbf{Case II}: $\Delta_{[i-1,i]}$ has type-1 resolution.}
\end{center}

Take any point $u\in \gamma_{i}^{1}\vert^{u^{1}_{i}}_{p^{1}_{i}}$, then there exists $w_{u}\in \beta_{i}^{1}$ such that $d_{\tX}(u,w_{u})\leq B(\delta,4,K,K)$. From the resolution of $\Delta_{[i-1,i]}$ there will exists $w_{u}'$ in either   $\gamma_{i-1,i}^{1}$ or $\gamma_{i-1,i}^{2}$ or $\beta_{i-1}^2$ such that $d_{\tX}(w_{u},w_{u}')\leq B(\delta,4,K,K)$. \\ 

\vspace{.2cm}
\underline{\textbf{Sub-case 1:}} Suppose $w_{u}'\in\beta_{i-1}^2$: $\beta_{i-1}^2$ is the common side of disks $\Delta_{[i-1,i]}$ and $\Delta_{i-1}$. We  consider the resolution of $\Delta_{i-1}$. There will exists $w_{u}''$ in either $\gamma_{i-1}^{1}$ or $\gamma_{i-1}^{2}$ such that $d_{\tX}(w_{u}',w_{u}'')\leq B(\delta,4,K,K)$ and so by triangle inequality, we have $d_{\tX}(u,w_{u}'')\leq 3B(\delta,4,K,K)$.
If $w_u''\in \gamma^2_{i-1}$ then we are done. Suppose $w_u''\in \gamma^1_{i-1}$.  There exists $\Bar u\in\gamma_1$ and $\Bar{w}_u''\in\gamma_1$ such that $d_{\tX}(u,\Bar u)\leq L$ and $d_{\tX}(w_u'',\Bar{w}_u'')\leq L$. The length of the subsegment of $(3,0)$-quasi-geodesic $\gamma_1$ between $\Bar u$ and $\Bar{w}_u''$ is at most $3\times (3B(\delta,4,K,K)+2L)$. The quasi-geodesic $\gamma_1$ passes through $p_i^1$ and $q_{i-1}^1$. Following the process as in \textbf{Case-I}, we get a number $L_1$ in terms of $B(\delta,4,K,K)$ and $L$ such that
$\ell(\gamma_i^1 |_{u}^{p_i^1})+\ell(\gamma^1_{i-1,i})+\ell(\gamma_{i-1}^1|_{q^1_{i-1}}^{w_u''})\leq L_1.$ 
If we take $w_u''$ to be the point on singular fiber then $d_{\tX}(w_u'',\gamma^2_{i-1})\leq B(\delta,4,K,K)$. In general, any path $\gamma_i^1 |_{u}^{p_i^1}*\gamma^1_{i-1,i}*\gamma_{i-1}^1|_{q^1_{i-1}}^{w_u''}$ is contained in a path of length at most $L_1$ and whose one end correspond to a singular fiber in the disk $\Delta_{i-1}$.
Hence, $d_{\tX}(u,\gamma^2_{i-1})\leq L_1+B(\delta,4,K,K)$ and $d_{\tX}(u,\gamma_2)\leq L_1+B(\delta,4,K,K)+L$ 
(See Figure-4).\\
\vspace{.2cm}
\underline{\textbf{Sub-case 2:}} $w_{u}'\in\gamma_{i-1,i}^{1}$ or $\gamma_{i-1,i}^{2}$. In this case, we will follow the exact same process as in \textbf{Case-I} to find a point in $\gamma_{i-1,i}^{2}\subset\gamma_2$ which is at uniformly bounded distance from $u$.\\
\begin{center}
    \includegraphics[width=14cm,height=4.8cm]{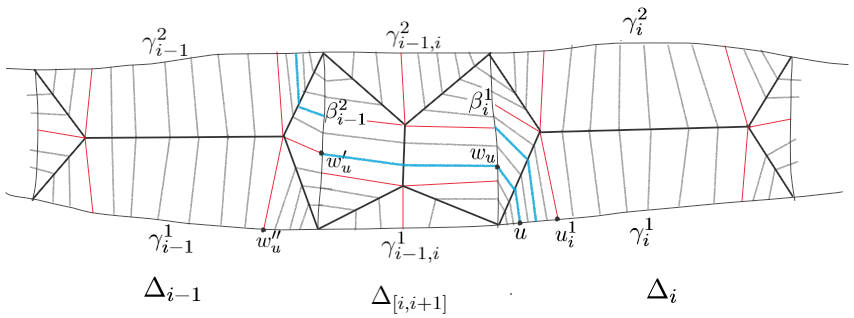}
\end{center}
\begin{center}
\underline{\small    Figure - 4}
\end{center}
If $u\in\gamma^1_{i-1,i}\subset\gamma_1$ then as above there exists a point $w_u''$ in $\gamma^2_{i-1}\cup\gamma^2_{i-1,i}\cup\gamma^2_i$, and hence a point in $\gamma_2$, which is at uniformly bounded distance from $u$.
For the initial and terminal disks, take the left (respectively right) end of Figure 3.1, Figure 3.2 and Figure 4 to be the point $x$ (respectively $y$)  and repeat the same argument as above. So, we get 
that each point of $\gamma_1$ lies in a bounded distance of $\gamma_2$ where the bound depends only on $\delta$ and $\sigma$. We can reverse the roles of $\gamma_1$ and $\gamma_2$ to conclude that 
they lie in a bounded neighbourhood of each other where the bound depends only on $\delta$ and $\sigma$. This implies $\tX$ is hyperbolic. As vertex spaces are $\delta$-hyperbolic geodesic spaces, the geodesics in a vertex space are uniformly strongly contracting and hence by Corollary \ref{main corollary}, vertex spaces are quasiconvex in $\tX$.

\end{proof}

\end{document}